\def\titlerunning#1{\gdef\titrun{#1}}
\def\author#1{\gdef\autrun{\def\and{\unskip, }#1}\gdef\@author{#1}}
\def\address#1{{\def\and{\\\hspace*{18pt}}\renewcommand{\thefootnote}{}%
\footnote {#1}}%
\markboth{\autrun}{\titrun}}
\def\email#1{e-mail: #1}
\def\subjclass#1{{\renewcommand{\thefootnote}{}%
\footnote{\emph{Mathematics Subject Classification (2010):} #1}}}
\def\keywords#1{\par\medskip
\noindent\textbf{Keywords.} #1}
\newtheorem{theorem}{Theorem}[section]
\newtheorem{definition}[theorem]{Definition}
\newtheorem{lemma}[theorem]{Lemma}
\newtheorem{corollary}[theorem]{Corollary}
\newtheorem{proposition}[theorem]{Proposition}
\newtheorem{remark}[theorem]{Remark}
\numberwithin{equation}{section}
\numberwithin{equation}{section}
\begin{document}

\baselineskip=17pt

\titlerunning{Existence results for second-order monotone differential inclusions}

\title{Existence results for second-order monotone differential inclusions on the positive half-line}

\author{Gheorghe Moro\c{s}anu}

\date{January 27, 2014}

\maketitle

\address{Gheorghe Moro\c{s}anu: Central European University, Department of Mathematics, Nador u. 9, 1051 Budapest, Hungary; \email{morosanug@ceu.hu}}

\subjclass{Primary 34G20; Secondary 34G25, 47J35}

\begin{abstract}
Consider in a real Hilbert space $H$ the differential equation
(inclusion) $(E)$: $p(t)u^{\prime \prime}(t)+q(t)u^{\prime}(t)\in
Au(t)+f(t)$ for a.a.  $t>0$, with the condition $(B)$: $u(0)=x \in
\overline{D(A)}$, where $A\colon D(A)\subset H\rightarrow H$ is a
(possibly set-valued) maximal monotone operator whose range
contains $0$; $p,q\in L^{\infty}(0,\infty )$, with $\mathrm{ess} \inf \ p>0$ and $q^+ \in L^1(0, \infty )$. 
More than four decades ago, V. Barbu established the existence of a
unique bounded (on $[0,\infty)$) solution to $(E)$, $(B)$, in the
particular case $p\equiv 1$, $q\equiv 0$ and $f\equiv 0$.
Subsequently the existence of bounded solutions in the homogeneous
case ($f\equiv 0$) has been further investigated by H. Brezis (1972),
N. Pavel (1976), L. V\'{e}ron (1974-76), and by E.I. Poffald and 
S. Reich (1984) when $A$ is an $m$-accretive operator in a Banach 
space. The non-homogeneous case
has received less attention from this point of view. R.E. Bruck
solved affirmatively this problem (in 1980), but under the
restrictive condition that $A$ is coercive (and $p \equiv 1$,
$q\equiv 0$, $f\in L^{\infty}(0,\infty ;H)$). On the other
hand, much attention has been paid by several authors 
to the asymptotic behavior of bounded solutions (if they exist) as $t\rightarrow \infty $,
both in the homogeneous and nonhomogeneous case. Recently, I
established jointly with 
H. Khatibzadeh [Set-Valued Var. Anal. 
DOI 10.1007/s11228-013-0270-3] the existence of (weak and strong) bounded solutions to
$(E)$, $(B)$, in the case $p \equiv 1$, $q\equiv 0$, under the
optimal condition $tf(t) \in L^1(0,\infty ; H)$. In this paper,
this result is extended to the general case of non-constant
functions $p, \, q$ satisfying the mild conditions above, thus
compensating for the lack of existence theory for such kind of
second order problems. 
Note that our results open up the possibility to apply Lions' method 
of artificial viscosity towards approximating the solutions of some 
nonlinear parabolic and hyperbolic problems, as shown in the last 
section of the paper. 

\keywords{Strong solution, weak solution, bounded solution, smoothing effect, minimization problem, the method of artificial viscosity}
\end{abstract}

\section{Introduction}
Let $H$ be a real Hilbert space with the inner product $(\cdot,
\cdot )$ and the induced norm $\Vert x \Vert = (x,x)^{1/2}$.
Consider the second-order differential equation (inclusion)
\begin{equation}\tag{$E$}\label{E}
p(t)u^{\prime \prime}(t)+q(t)u^{\prime}(t)\in Au(t)+f(t) \quad
\text{for a.a. $t\in {\mathbb{R}}_{+}: = [0,\infty)$,}
\end{equation}
with the condition
\begin{equation}\tag{$B$}\label{B}
u(0)=x \in \overline{D(A)},
\end{equation}
where

\noindent $(H1)$ \quad $A\colon D(A)\subset H\rightarrow H$ is a
(possibly set-valued) maximal monotone operator whose graph
contains $[0,0]$; \vskip2pt \noindent $(H2)$ \quad $p, \, q\in
L^{\infty}({\mathbb{R}}_{+}):= L^{\infty}({\mathbb{R}}_{+};
\mathbb{R})$, with $\mathrm{ess} \inf p>0$ and $q^+ \in
L^1({\mathbb{R}}_{+})$, where $q^+(t): = \max \, \{ q(t), 0\} $;
\vskip2pt\noindent and $f$ is a given $H$-valued function whose
(required) properties will be specified later. \vskip2pt\noindent
It is worth pointing out that in $(H1)$ one can assume without any
loss of generality that the range $R(A)$ of $A$ contains the null
vector, since this case reduces to the previous one for a maximal
monotone operator obtained from $A$ by shifting its domain.

Information on monotone operators can be found in \cite{VB3},
\cite{HB2}, \cite{GM}.

We continue with some historical comments:

It was V. Barbu who established for the first time the existence
of a unique bounded solution to equation \eqref{E} subjected to
\eqref{B}, in the special case $p\equiv 1$, $q\equiv 0$ and $f\equiv
0$, in \cite{VB1, VB2} (see also Chapter V in \cite{VB3}),
followed by the nice paper by H. Brezis \cite{HB1}, who considered
a more general condition at $t=0$; see also N. Pavel \cite{NP1}, 
as well as E.I. Poffald and S. Reich \cite{PR2} in the case 
when $A$ is an $m$-accretive operator in a Banch space. 
L. V\'{e}ron \cite{LV1, LV2} paid attention to the same problem 
(existence of
bounded solutions) in the case of (sufficiently smooth) variable
coefficients $p(t), \, q(t)$ and $f\equiv 0$. The existence of
bounded solutions in the non-homogeneous case (i.e., when $f$ is
not the null function) has received less attention. Recall that
Bruck \cite{Bruck1} established the existence of a bounded
solution on $\mathbb{R}$ of equation \eqref{E} (implying that all
solutions of \eqref{E} are bounded on ${\mathbb{R}}_+$), in the
case $p\equiv 1$, $q\equiv 0$, $f\in L^{\infty}(\mathbb{R} )$,
under the restrictive condition that $A$ is coercive. We also
mention the relatively recent article by Apreutesei \cite{NA},
addressing the case of sufficiently smooth coefficients $p, \, q$,
with $p(t)\ge p_0>0, \ q(t)\ge q_0>0$, and $x\in D(A)$.

On the other hand, there has been a great deal of work
pertaining to the asymptotic behavior of bounded solutions (if they exist) as
$t\rightarrow \infty$ of \eqref{E}, \eqref{B}, including the case
of periodic or almost periodic forcing. See \cite{GM1979, LV3,
Enzo1, Enzo2} for the case $p\equiv 1$, $q\equiv 0$, $f\equiv 0$.
The case $p\equiv 1$, $q\equiv 0$ and $f$ periodic or almost
periodic was thoroughly analyzed by Biroli \cite{MB1, MB2}, 
Bruck \cite{Bruck1, Bruck2}, and by Poffald and Reich \cite{PR1, PR2} 
in the case when $A$ is an $m$-accretive operator in a Banach space. 
In recent years Djafari Rouhani and
Khatibzadeh have established various results on the asymptotic
behavior of bounded solutions (if they exist), as $t\rightarrow \infty$, for both
constant and variable coefficients $p, \, q$, and for both the
homogeneous and non-homogeneous case of \eqref{E} (see
\cite{DK}-\cite{DK4}). \vskip10pt In order to compensate for the
lack of existence theory for such kind of second order problems, I
have recently started working on this subject. Recall that (as in
\cite{AP}) equation \eqref{E} can be written as
\begin{equation}\label{1.1}
(a(t)u^{\prime}(t))^{\prime} \in b(t)Au(t) + b(t)f(t),
\end{equation}
where
\[
a(t)=\exp \left(\int_{0}^{t} \frac{q(s)}{p(s)} ds\right), \quad
b(t)=\frac{a(t)}{p(t)}.
\]
Denote by $X$ the weighted space $L_{b}^{2}({\mathbb{R}}_{+}; H)=
L^{2}({\mathbb{R}}_{+}; H; b(t)dt)$, which is a real Hilbert space
with the scalar product
\[
{(f,g)}_X = \int_0^{\infty}b(t)(f(t),g(t)) dt,
\]
and the induced norm
\[
{\lVert f \rVert}^2_X = {(f,f)}_X.
\]
Recently \cite{GM2013, GM2014}, we proved the existence of a
unique strong solution $u\in X$ to equation $(E)$ subjected to
$u(0)=x\in \overline{D(A)}$, under the above conditions $(H1)$,
$(H2)$, where instead of $q^+\in L^1({\mathbb{R}}_+)$ we had
a different condition on $q$: either
$\mathrm{ess}\inf q>0$ or $\mathrm{ess}\sup q <0$. Note that there we
did replace the usual boundedness (on ${\mathbb{R}}_+$) condition
by a different one, namely $u\in X$, which may or may not imply
boundedness of $u$. More precisely, we proved that if $x\in D(A)$
and $f\in X$, then (cf. Theorem 3.1 in \cite{GM2013}), there
exists a unique $u\in X$, with $u^{\prime}, \, u^{\prime \prime}
\in X$, such that $u(0)=x$ and $u$ satisfies equation \eqref{E}
for a.a. $t > 0$. Since
\begin{align} \nonumber
a(t){\Vert u(t)\Vert}^2 =&{\Vert x \Vert}^2 + \int_0^t
\frac{d}{ds}\big( a(s)
{\Vert u(s)\Vert}^2\big)\, ds\\
\nonumber
 =&{\Vert x \Vert}^2 + \int_0^t qb{\Vert u(s)\Vert}^2\, ds + 2\int_0^ta(u,u^{\prime})\, ds \\
 \nonumber
\le&{\Vert x \Vert}^2 + M({\Vert u \Vert}_X^2 + {\Vert u \Vert}_X
{\Vert u^{\prime} \Vert}_X) < \infty,
\end{align}
it follows that
\begin{equation}\label{O}
\Vert u(t)\Vert =  O\Big(\exp
\big(-\frac{1}{2}\int_0^t\frac{q}{p}\, ds\big) \Big).
\end{equation}
If $x\in \overline{D(A)}$ and $f\in X$, then (cf. Theorem 1.1 in
\cite{GM2014}) there exists a unique $u\in C({\mathbb{R}}_+;H)\cap
X$, such that $u^{\prime}, \, u^{\prime \prime} \in
L^2_b([\varepsilon , \infty ); H)$ for all $\varepsilon >0$, such
that $u(0)=x$ and $u$ satisfies equation \eqref{E} for a.a. $t >
0$. Therefore \eqref{O} is again valid for $t \ge \varepsilon $.
So, if $\mathrm{ess}\inf q>0$, then $\Vert u(t)\Vert$ decays
exponentially to zero. In the case $\mathrm{ess}\sup q<0$ and
$f\in X$, $\Vert u(t) \Vert$ could be unbounded. This fact is
illustrated by the simple scalar equation $u^{\prime \prime} -
u^{\prime} = 1, \ t>0, \ u(0)=x$, that has a unique solution in
$X$ (here $X=L^2({\mathbb{R}}_+; e^{-t} dt)$), $u(t)=x-t$, which
is unbounded (and so are all the other solutions).

It is worth pointing out that the sign condition on $q$ (i.e.,
either $\mathrm{ess}\inf \ q>0$ or $\mathrm{ess} \sup \ q < 0$)
was essential in our previous treatment \cite{GM2013, GM2014}.
However, by an inspection of the proof of Theorem 1.1 in
\cite{GM2014} we can see that if in addition $A$ is strongly
monotone, then the existence of $u\in X$ follows in absence of the
sign condition on $q$. Of course, strong monotonicity is a very
restrictive condition. \vskip10pt Our aim in this paper is to
derive existence of bounded (on ${\mathbb{R}}_+$) solutions $u$ to
equation $(E)$ subjected to $u(0)=x\in \overline{D(A)}$, under
$(H1)$ and $(H2)$ above, including the alternative assumption
$q^+\in L^1({\mathbb{R}}_+)$ (which allows $q(t)$ to be "close" or
equal to zero), plus appropriate conditions on the nonhomogeneous
term $f$. Of course, replacing the condition $u\in X$ by a
boundedness one leads to a different problem that requires
separate analysis.

Note that our assumptions on $p$ and $q$ are weaker than those
previously used by other authors.

Concerning the methodology we use in this paper, note that, while
in \cite{GM2013, GM2014} we performed a global analysis within the
space $X$ defined above, here we derive existence on
${\mathbb{R}}_+$ (of bounded solutions, or bounded solutions in a
generalized sense, as specified below) by a limiting process
applied to a sequence of two-point boundary value problems on
$[0,n]$, $n=0,1,...$ This approach has some common features with
that used in \cite{KM} for the particular equation
\begin{equation}
u^{\prime \prime}(t) \in Au(t) + f(t), \ t>0, \label{simpler}
\end{equation}
subjected to $u(0)=x\in \overline{D(A)}$. Existence of
bounded solutions on ${\mathbb{R}}_+$ for equation \eqref{simpler}
in the case of a general maximal monotone $A$ was first
established in \cite{KM}. More precisely, in \cite{KM} a concept
of a weak solution was defined for equation \eqref{simpler}, and
the existence of a unique, bounded, weak solution $u=u(t), \ t\ge
0,$ was established under the optimal condition $tf(t)\in
L^1({\mathbb{R}}_+; H)$ (simple examples involving $A=0$ show
that this class of the $f$'s cannot be enlarged if we want to have 
bounded solutions); if, in addition, $f\in
L^2_{loc}([0, \infty );H)$, the solution $u$ of equation
\eqref{simpler} is strong (i.e., $u$ is twice differentiable and
satisfies \eqref{simpler} for a.a. $t>0$). In this paper we extend this
existence result to the case of variable coefficients $p$ and $q$
satisfying $(H2)$. In Theorems \ref{t1}, \ref{t2} we establish the
existence of weak and strong solutions $u$ to \eqref{E}, \eqref{B}
satisfying
\begin{equation}\tag{$C$}\label{C}
\sup_{t\ge 0} {a_-(t)}{\Vert u(t) \Vert}^2 < \infty ,
\end{equation}
where $a_-(t) = \exp{\Big( -\int_0^t q^-/p \Big)  }$,  $q^-(t)=-\min \, \{ q(t),0 \}$. If, in addition to $(H2)$, 
$q^- \in L^1({\mathbb{R}}_+)$ (i.e., $q\in L^1({\mathbb{R}}_+)\cap
L^{\infty}({\mathbb{R}}_+)$), then \eqref{C} becomes a real
boundedness condition, and (cf. Corollary \ref{corollary}) for
each pair $(x,f)\in \overline{D(A)} \times L^1({\mathbb{R}}_+; H; tdt)$
there exists a unique, weak, bounded solution $u$ of \eqref{E},
\eqref{B}. If in addition $f\in L^2_{loc}([0, \infty );H)$ then $u$ is even 
strong.

Note that we use a constructive method, suitable for the numerical
approximation and for the variational approach when $A$ is a
subdifferential operator. The smoothing effect on the starting values $x\in \overline{D(A)}$  is pointed
out. In the last section of the paper we show how our results can be 
used to approximate the solutions of some parabolic and hyperbolic 
problems by the method of artificial viscosity introduced by 
J.L. Lions \cite{JLL}.

\section{Some auxiliary results}
For a given $T\in (0,\infty)$ denote by $X_{T}$ the weighted space
$L^2(0,T;H; b(t)dt)$, where $b=b(t)$ is the function defined in
Section 1, restricted to the interval $[0,T]$. $X_{T}$ is a
Hilbert space equipped with the scalar product
$$
(f_1, f_2)_{X_{T}} = \int_0^T b(t)(f_1(t), f_2(t))\, dt,
$$
and the induced norm. In fact, under our conditions below, $X_{T}$
coincides with the usual $L^2(0,T;H)$ algebraically and
topologically. The need for the weight $b(t)$ will become obvious
in what follows.
\begin{proposition}\label{prop}
Assume that $p,\, q\in L^{\infty}(0,T)$, with $\mathrm{ess} \inf \
p>0$. Define $B:D(B)\subset X_{T} \rightarrow X_{T}$ by
$$
D(B):= \{ v\in X_{T}; \, v^{\prime}, \, v^{\prime \prime} \in
X_{T},\, v(0)=x, \, v(T)=y \},
$$
$$
Bv = -pv^{\prime \prime} -q v^{\prime},
$$
where $x, \, y\in H$ are given vectors. Then, $B$ is a maximal
monotone operator in $X_{T}$. More precisely, $B$ is the
subdifferential of the proper, convex, lower semicontinuous
function $\Psi : X_{T} \rightarrow [0,\infty )$  defined by
$$
\Psi (v) = \frac{1}{2} \int_0^T a(t)\Vert v^{\prime}(t)\Vert^2 dt
+ j(v(0)-x) + j(v(t)-y),
$$
where $j$ is the indicator function of the set $\{ 0\} \subset H$.
\end{proposition}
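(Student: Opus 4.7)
The plan is to identify the natural variational structure of $B$ in the weighted space $X_T$ and prove the stronger statement $B=\partial\Psi$; maximal monotonicity of $B$ is then automatic. The key algebraic observation is that $a'(t)=q(t)b(t)$, so for every sufficiently smooth $v$ one has the identity
\[
b(t)\,Bv(t) = -a(t)v''(t)-a'(t)v'(t) = -\bigl(a(t)v'(t)\bigr)',
\]
which reveals $B$ as (essentially) the weak Euler--Lagrange operator associated to the quadratic form $\tfrac{1}{2}\int_0^T a\|v'\|^2\,dt$ inside $X_T$.

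First I would check that $\Psi$ is proper, convex, and lower semicontinuous on $X_T$. Properness holds because the affine interpolation $v(t)=x+(t/T)(y-x)$ gives $\Psi(v)<\infty$ (recall $a\in L^\infty(0,T)$). Convexity is clear: the quadratic term is convex and $j$ is convex. For lower semicontinuity, suppose $v_n\to v$ in $X_T$ with $\Psi(v_n)\le C$; then $v_n(0)=x$, $v_n(T)=y$, and $\{v_n'\}$ is bounded in $L^2(0,T;H;a(t)dt)$, which (since $\operatorname{ess\,inf} p>0$ and $a,b$ are bounded above and below on $[0,T]$) is equivalent to $L^2(0,T;H)$. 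A subsequence $v_n'\rightharpoonup \xi$, and testing against $C_c^\infty(0,T)$ together with $v_n\to v$ in $L^1(0,T;H)$ gives $\xi=v'$; weak lower semicontinuity of the norm then yields $\int_0^T a\|v'\|^2\,dt\le\liminf \int_0^T a\|v_n'\|^2\,dt$. The boundary values $v(0)=x$, $v(T)=y$ follow because each $v_n$ is continuous and the trace is weakly continuous on bounded sets of $H^1$. Hence $\partial\Psi$ is a maximal monotone operator on $X_T$.

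Next I would show $B\subseteq\partial\Psi$. Given $v\in D(B)$ and any $w\in X_T$, the inequality $\Psi(w)\ge \Psi(v)+(Bv,w-v)_{X_T}$ is trivial when $w(0)\ne x$ or $w(T)\ne y$ (right side finite, left side $+\infty$). Otherwise, using the identity above,
\[
(Bv,w-v)_{X_T}=\int_0^T \bigl(-(av')',\,w-v\bigr)\,dt
=-\bigl[a(v',w-v)\bigr]_0^T+\int_0^T a\,(v',w'-v')\,dt,
\]
and the boundary contributions vanish because $w(0)-v(0)=w(T)-v(T)=0$. The elementary convexity inequality $\tfrac12\|w'\|^2-\tfrac12\|v'\|^2\ge (v',w'-v')$, multiplied by $a>0$ and integrated, then yields $\Psi(w)-\Psi(v)\ge(Bv,w-v)_{X_T}$, which is what we want.

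Finally I would prove the reverse inclusion $\partial\Psi\subseteq B$. If $g\in\partial\Psi(v)$, then $v\in D(\Psi)$ so $v(0)=x$, $v(T)=y$, $v'\in L^2(a\,dt)$. Testing the subdifferential inequality with $w=v\pm\varepsilon\phi$ for $\phi\in C_c^\infty((0,T);H)$ and letting $\varepsilon\to 0^+$ gives $\int_0^T a(v',\phi')\,dt=\int_0^T b(g,\phi)\,dt$. Hence $(av')'=-bg$ in the sense of distributions, which rearranges to $-pv''-qv'=g$, i.e.\ $v\in D(B)$ and $Bv=g$. Combining the two inclusions yields $B=\partial\Psi$, and maximal monotonicity of $B$ follows from the well-known fact that the subdifferential of a proper, convex, lower semicontinuous function is maximal monotone. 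I expect the only delicate point to be the lower semicontinuity argument, which requires keeping careful track of the equivalence of the weighted and unweighted $L^2$ topologies on the bounded interval $[0,T]$ and of the continuity of the trace.
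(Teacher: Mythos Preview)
Your proof is correct and follows the same route as the paper's: both rest on the identity $b\,Bv=-(av')'$ (written in the paper as $Bv=-\tfrac{p}{a}(av')'$), after which $B=\partial\Psi$ is established by the standard two-inclusion argument. The paper gives only the monotonicity computation explicitly and then refers to Lemma~2.2 of \cite{GM2013} for the remaining details, so your write-up is in fact more self-contained.

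One small point worth making explicit in the reverse inclusion $\partial\Psi\subseteq B$: from $(av')'=-bg$ in the sense of distributions you should first observe that $av'\in W^{1,2}(0,T;H)$, and then use that $a\in W^{1,\infty}(0,T)$ with $\operatorname{ess\,inf}a>0$ (indeed $a'=qb\in L^\infty$) to conclude $v'=(av')/a\in W^{1,2}$, hence $v''\in L^2$ and $v\in D(B)$. Without this step the ``rearrangement'' to $-pv''-qv'=g$ is not yet justified.
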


\begin{proof}
Note that  $Bv=-\frac{p}{a}(av^{\prime})^{\prime}$ for all $v\in
D(B)$, so the monotonicity of $B$ in $X_{T}$ (equipped with the
scalar product ${(\cdot , \cdot )}_{X_T}$ defined above) follows
easily. The rest of the proof is similar to the proof of Lemma 2.2
in \cite{GM2013}, so we omit it.
\end{proof}

\begin{lemma}\label{second}
Let $A$ satisfy $(H1)$, $p,\, q\in L^{\infty}(0,T)$, with
$\mathrm{ess} \inf \ p>0$, and let $f\in L^2(0,T;H)$. Then, for
all $x,\, y\in D(A)$, there exists a unique $u = u(t) \in
W^{2,2}(0,T;H)$ satisfying
\begin{equation} \label{boundedinterval}
p(t)u^{\prime \prime}(t) + q(t)u^{\prime}(t) \in Au(t) + f(t) \ \
\text{for a.a.} \ t\in (0,T),
\end{equation}
\begin{equation}\label{BC}
u(0)=x, \ u(T)=y.
\end{equation}
\end{lemma}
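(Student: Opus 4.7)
My plan is to convert the boundary value problem into a monotone inclusion in $X_T$ and then regularize. Writing the problem as $(A+B)u \ni -f$, where $B$ is the maximal monotone operator from Proposition \ref{prop} and $A$ is extended pointwise to a maximal monotone operator on $X_T$ (the extension is again maximal monotone because $0 \in A0$), I would avoid invoking an abstract sum theorem for $A+B$ directly, since no interior domain intersection is readily available, and instead proceed by Yosida approximation, reserving the classical monotonicity argument for uniqueness.

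\textbf{Step 1 (regularized problem).} For each $\lambda > 0$ let $A_\lambda = \lambda^{-1}(I - (I+\lambda A)^{-1})$. Its pointwise extension to $X_T$ is monotone, Lipschitz, and everywhere defined, so $B + A_\lambda$ is maximal monotone. The functional $\Psi$ of Proposition \ref{prop} is coercive over its affine domain, because $\int_0^T a\|v'\|^2\,dt$ together with the Dirichlet value $v(0)=x$ controls $\|v\|_{X_T}$, while $(A_\lambda v, v) \geq 0$ (a consequence of $A_\lambda 0 = 0$ and monotonicity) adds no negative contribution. Hence $B + A_\lambda$ is surjective, and there exists a unique $u_\lambda \in D(B) \subset W^{2,2}(0,T;H)$ satisfying
\begin{equation*}
p u_\lambda'' + q u_\lambda' = A_\lambda u_\lambda + f \ \text{a.e.,} \quad u_\lambda(0) = x, \ u_\lambda(T) = y.
\end{equation*}

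\textbf{Step 2 (uniform bounds and passage to the limit).} Rewriting the equation as $(a u_\lambda')' = b(A_\lambda u_\lambda + f)$, I would test in $X_T$ against $u_\lambda$: integration by parts, the prescribed boundary data, the inequality $(A_\lambda u_\lambda, u_\lambda) \geq 0$, and Cauchy--Schwarz yield $u_\lambda$ bounded in $W^{1,2}(0,T;H)$. To bound $A_\lambda u_\lambda$ in $X_T$, I would test against a comparison function $\varphi$ with $\varphi(0) = x$, $\varphi(T) = y$ and $\varphi(t) \in D(A)$ (available because $x, y \in D(A)$), using the monotonicity of $A$ and the equation; this also controls $u_\lambda''$. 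Along a subsequence, $u_\lambda \rightharpoonup u$ weakly in $W^{2,2}$ and strongly in $C([0,T];H)$. Since $\|u_\lambda - J_\lambda u_\lambda\|_{X_T} = \lambda\|A_\lambda u_\lambda\|_{X_T} \to 0$, the pair $(J_\lambda u_\lambda, A_\lambda u_\lambda) \rightharpoonup (u, \xi)$ with $A_\lambda u_\lambda \in A(J_\lambda u_\lambda)$, and the strong--weak closedness of the maximal monotone graph in $X_T$ forces $\xi(t) \in A u(t)$ a.e. Thus $u$ solves \eqref{boundedinterval}--\eqref{BC}.

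\textbf{Step 3 (uniqueness).} If $u_1, u_2$ are two solutions, then $(a(u_1-u_2)')' \in b(Au_1 - Au_2)$ with matching endpoint values; pairing with $u_1 - u_2$ in $L^2(0,T;H)$, integration by parts kills the boundary term and gives
\begin{equation*}
-\int_0^T a \|(u_1 - u_2)'\|^2\,dt \ = \ \int_0^T b\,(Au_1 - Au_2,\, u_1 - u_2)\,dt \ \geq \ 0,
\end{equation*}
forcing $(u_1 - u_2)' \equiv 0$, and hence $u_1 \equiv u_2$ by matching boundary data.

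The main obstacle I foresee is the $\lambda$-uniform $X_T$-estimate on $A_\lambda u_\lambda$, because $A$ is only assumed maximal monotone (neither a subdifferential nor strongly monotone). This is precisely where the hypothesis $x, y \in D(A)$ becomes essential: it furnishes a comparison function taking values in $D(A)$ against which one can test to extract the bound via monotonicity, a step that would fail if the endpoints were only known to lie in $\overline{D(A)}$.
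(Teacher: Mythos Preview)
Your overall strategy---Yosida regularize, derive uniform bounds, pass to the limit, uniqueness by monotonicity---matches the paper's, and your Step~3 is exactly the paper's uniqueness argument. However, Step~2 contains two genuine gaps.

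First, the claimed passage ``weakly in $W^{2,2}$ and strongly in $C([0,T];H)$'' fails: when $H$ is infinite-dimensional, the embedding $W^{2,2}(0,T;H)\hookrightarrow C([0,T];H)$ is \emph{not} compact, so weak $W^{2,2}$-bounds alone give only weak convergence of $u_\lambda$. Without strong convergence of $J_\lambda u_\lambda$ in $X_T$, the demiclosedness of $\bar A$ (which is strong--weak, not weak--weak) cannot be invoked to identify the limit. The paper closes this gap by proving directly that $(u_\lambda)$ is Cauchy: subtracting the equations for $u_\lambda$ and $u_\mu$, pairing with $u_\lambda-u_\mu$, and using $A_\lambda u_\lambda\in A(J_\lambda u_\lambda)$ together with the uniform bound on $\|A_\lambda u_\lambda\|_{X_T}$ yields $\|u_\lambda'-u_\mu'\|_{X_T}^2\le C(\lambda+\mu)$, hence strong convergence in $C([0,T];H)$.

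Second, your route to the uniform bounds is shakier than indicated. Testing against $u_\lambda$ produces boundary terms $a(T)(u_\lambda'(T),y)-(u_\lambda'(0),x)$ that are not yet controlled, so a $W^{1,2}$-bound does not fall out directly. Your proposed fix via a comparison function $\varphi$ with $\varphi(t)\in D(A)$ presumes such a $\varphi\in W^{2,2}$ with $A^0\varphi\in L^2$ exists; for a general maximal monotone $A$ the domain $D(A)$ need not be convex, and no smooth path joining $x$ to $y$ inside $D(A)$ is guaranteed. The paper avoids this entirely: after reducing to $y=0$, it tests the equation against $A_\lambda u_\lambda$ and uses the pointwise inequality $(u_\lambda',(A_\lambda u_\lambda)')\ge 0$ (monotonicity of $A_\lambda$) to obtain $\|A_\lambda u_\lambda\|_{X_T}^2\le \|A^0 x\|\,\|u_\lambda'(0)\|+\|f\|_{X_T}\|A_\lambda u_\lambda\|_{X_T}$; here $x\in D(A)$ enters only through $\|A_\lambda x\|\le\|A^0 x\|$. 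All estimates then depend on $\|u_\lambda'(0)\|$, which is bounded by a bootstrap: the identity $\int_0^T (T-t)\,u_\lambda''\,dt=-T u_\lambda'(0)-x$ gives $\|u_\lambda'(0)\|\le C\sqrt{\|u_\lambda'(0)\|}+C'$, closing the loop. This self-closing mechanism on $\|u_\lambda'(0)\|$ is the missing idea in your sketch.
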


\begin{proof}
We can assume without any loss of generality that $y=0$
(otherwise, one can use the substitution $\tilde{u}(t) = u(t)
-y$). Let $A_{\lambda}$ be the Yosida approximation of $A$ for
$\lambda >0$, i.e., $A_{\lambda}z ={\lambda}^{-1}(z-J_{\lambda}z),
\ z\in H$, where $J_{\lambda} = (I + \lambda A)^{-1}$ (the
resolvent of $A$). Denote by $\bar{A}$ the realization of $A$ in
$X_{T}$, i.e., $\bar{A} = \{ [v,w]\in X_{T}\times X_{T}:\, [v(t),
w(t] \in A \ \text{for a.a.} \ t\in (0,T)\} $. Note that
${\bar{A}}_{\lambda} + B$ is maximal monotone in $X_{T}$ for all
$\lambda >0$, where $\bar{A}_{\lambda}$ is the Yosida
approximation of $\bar{A}$ and $B$ is the operator defined above
(see Proposition \ref{prop}), where $y=0$. Therefore, for each
${\lambda >0}$ there exists a $u_{\lambda}\in W^{2,2}(0,T;H)$ that
satisfies
\begin{equation}\label{lambda}
-pu_{\lambda}^{\prime \prime} - q u_{\lambda}^{\prime} +
A_{\lambda}u_{\lambda} + \lambda u_{\lambda} = -f \  \ a.e. \
\text{in} \ (0,T),
\end{equation}
\begin{equation}\label{BCs}
u_{\lambda}(0) = x, \ u_{\lambda}(T) = 0.
\end{equation}
Equation \eqref{lambda} can be equivalently written as
\begin{equation}\label{equiv}
{\big( au_{\lambda}^{\prime} \big)}^{\prime} =
b(A_{\lambda}u_{\lambda} + \lambda u_{\lambda} + f) \  \ a.e. \
\text{in} \ (0,T).
\end{equation}
If we multiply equation \eqref{equiv} by $u_{\lambda}(t)$,
integrate the resulting equation over $[\tau ,T]$, and use the
fact that $A_{\lambda}0=0$, we obtain
\begin{equation}\label{ineq}
\int_{\tau}^T\big( (au_{\lambda}^{\prime})^{\prime}, u_{\lambda}
\big) \, dt \ge \lambda \int_{\tau}^T b{\Vert
u_{\lambda}\Vert}^2dt + \int_{\tau}^Tb(f,u_{\lambda})\, dt,
\end{equation}
which implies (see \eqref{BCs})
\begin{equation}\label{zero}
-a(\tau)(u_{\lambda}^{\prime}(\tau),u_{\lambda}(\tau))-\int_{\tau}^Ta{\Vert
u_{\lambda}^{\prime}\Vert}^2dt \ge \lambda \int_{\tau}^T b{\Vert
u_{\lambda}\Vert}^2dt + \int_{\tau}^Tb(f,u_{\lambda})\, dt.
\end{equation}
Therefore
\begin{equation}
\frac{1}{2}a(\tau )\frac{d}{d\tau} {\Vert u_{\lambda}(\tau
)\Vert}^2 \le \int_{\tau}^Tb\Vert f\Vert \cdot \Vert u_{\lambda}
\Vert \, ds.
\end{equation}
Integrating this inequality over $[0,t]$ yields
\begin{eqnarray}
\frac{1}{2}a(t){\Vert u_{\lambda}(t)\Vert}^2 - \frac{1}{2}{\Vert
x\Vert}^2 -\frac{1}{2} \int_0^tbq{\Vert u_{\lambda}\Vert}^2d\tau
\le & \int_0^Td\tau \int_{\tau}^Tb\Vert f\Vert \cdot \Vert u_{\lambda} \Vert \, ds \nonumber \\
= & \int_0^T\tau b \Vert f\Vert \cdot \Vert u_{\lambda} \Vert \,
d\tau, \nonumber
\end{eqnarray}
which implies
\begin{equation} \nonumber
a(t){\Vert u_{\lambda}(t)\Vert}^2 + \int_0^t \frac{q^-}{p}a{\Vert
u_{\lambda}\Vert}^2d\tau \le
\end{equation}
\begin{equation} \label{qminus}
\big( {\Vert x \Vert}^2 + 2\int_0^T\tau b \Vert f\Vert \cdot \Vert
u_{\lambda} \Vert \, d\tau \big) +  \int_0^t \frac{q^+}{p}a{\Vert
u_{\lambda}\Vert}^2d\tau, \ 0\le t\le T.
\end{equation}
Recall that $q^+(t):=\max \ \{ q(t),0  \}$ and $q^{-}(t):= -\min \
\{q(t),0 \}$. It follows by the Gronwall-Bellman lemma that
\begin{equation}\label{hehe}
a(t){\Vert u_{\lambda}(t)\Vert}^2 \le \big( {\Vert x \Vert}^2 +
2\int_0^T\tau b \Vert f\Vert \cdot \Vert u_{\lambda} \Vert \,
d\tau \big)\exp\big( \int_0^t \frac{q^+}{p}\, d\tau\big), \ 0\le t
\le T,
\end{equation}
and so
\begin{equation}\label{lam}
{\Vert u_{\lambda}(t)\Vert}^2 \le M\big( {\Vert x \Vert}^2 +
2\int_0^T\tau b \Vert f\Vert \cdot \Vert u_{\lambda} \Vert \,
d\tau \big), \ 0\le t \le T,
\end{equation}
where $M=\exp{\big( \int_0^T \frac{q^-}{p}\, d\tau \big)}$.
Denoting $C_{\lambda} = \sup \{\Vert u_{\lambda}(t)\Vert : \, 0\le
t \le T  \}$, we obtain from \eqref{lam}
\begin{equation}
C_{\lambda}^2 \le M\big( {\Vert x \Vert}^2 +
2C_{\lambda}\int_0^T\tau b \Vert f\Vert  \, d\tau \big), \ 0\le t
\le T,
\end{equation}
which shows that $\sup_{\lambda >0} C_{\lambda}< \infty$, i.e.,
\begin{equation}\label{bdd}
\sup \{ \Vert u_{\lambda}(t)  \Vert ;\, 0\le t \le T, \ \lambda >0
\} < \infty .
\end{equation}
On the other hand, if we take $\tau =0$ in \eqref{zero}, we get
\begin{equation}\label{hoho}
\int_0^Ta{\Vert u_{\lambda}^{\prime}\Vert}^2 dt\le
-(u_{\lambda}^{\prime}(0), x) + \int_0^Tb\Vert f \Vert \cdot \Vert
u_{\lambda}\Vert \, dt,
\end{equation}
which implies (see also \eqref{bdd})
\begin{equation}\label{prime}
{\Vert u_{\lambda}^{\prime} \Vert }^2_{X_T} \le C_1\Vert
u_{\lambda}^{\prime}(0)\Vert +C_2,
\end{equation}
where $C_1, \, C_2$ are some positive constants. Now, multiplying
\eqref{equiv} by $A_{\lambda}u_{\lambda}$ and then integrating
over $[0,T]$, we obtain
\begin{equation}\label{Yo}
\int_0^T \Big( {\big( au_{\lambda}^{\prime}\big) }^{\prime},
A_{\lambda}u_{\lambda}\Big) \, dt\ge \int_0^T b{\Vert
A_{\lambda}u_{\lambda}\Vert }^2 dt + \int_0^T
b(f,A_{\lambda}u_{\lambda})\, dt.
\end{equation}
Integrating by parts in \eqref{Yo} leads to
\begin{equation}\label{Yos}
{\Vert A_{\lambda}u_{\lambda} \Vert}^2_{X_T} \le
-(u_{\lambda}^{\prime}(0), A_{\lambda}x) +{\Vert f
\Vert}_{X_T}{\Vert A_{\lambda}u_{\lambda} \Vert }_{X_T},
\end{equation}
since $ \big( u_{\lambda}^{\prime},
{(A_{\lambda}u_{\lambda})}^{\prime}\big) \ge 0$. Recall that
$\Vert A_{\lambda}x \Vert \le \Vert A^0x \Vert$ for all $\lambda
>0$, where $A^0$ is the minimal section of $A$. Therefore,
\eqref{Yos} implies
\begin{equation}\label{Yosi}
{\Vert A_{\lambda}u_{\lambda} \Vert }_{X_T}^2 \le C_3\Vert
u_{\lambda}^{\prime}(0)\Vert + C_4.
\end{equation}
By \eqref{lambda}, \eqref{bdd}, \eqref{prime}, and \eqref{Yosi} we
can see that
\begin{align}\nonumber
{\Vert u_{\lambda}^{\prime \prime} \Vert}_{X_T}^2 \le & C_5\big(
{\Vert u_{\lambda}^{\prime} \Vert }_{X_T}^2+\lambda {\Vert
u_{\lambda} \Vert }_{X_T}^2 + {\Vert A_{\lambda}u_{\lambda}  \Vert
}_{X_T}^2 + {\Vert f \Vert}_{X_T}^2\big) \\ \label{primeprime} \le
& C_6\Vert u_{\lambda}^{\prime}(0)\Vert + C_7,
\end{align}
for all $\lambda \in (0,\lambda_0 ]$, where $\lambda_0$ is an
arbitrarily   fixed positive number. On the other hand, using the
obvious relation
\begin{equation}
\int_0^T (T-t)u_{\lambda}^{\prime \prime}(t)\, dt =
-Tu_{\lambda}^{\prime}(0) -x,
\end{equation}
and \eqref{primeprime}, we derive
\begin{align}\nonumber
\Vert u_{\lambda}^{\prime}(0)\Vert  \le & C_8{\Vert
u_{\lambda}^{\prime \prime} \Vert}_{X_T} +C_9 \\ \nonumber \le &
C_8\sqrt{C_6\Vert u_{\lambda}^{\prime}(0)\Vert + C_7} + C_9 \\
\nonumber \le & C_{10}\sqrt{\Vert u_{\lambda}^{\prime}(0)\Vert} +
C_{11},
\end{align}
which shows that $\sup_{0< \lambda \le \lambda_0}\Vert
u_{\lambda}^{\prime}(0)\Vert < \infty$. So, according to
\eqref{prime}, \eqref{Yosi}, and \eqref{primeprime}, the sequences
$u_{\lambda}^{\prime}$, $u_{\lambda}^{\prime \prime}$,
$A_{\lambda}u_{\lambda}$ ($0< \lambda \le \lambda_0$) are all
bounded in $X_T$. Now, for $\lambda, \, \mu \in (0,  \lambda_0]$,
we derive from \eqref{equiv}
\begin{equation}\nonumber
\int_0^T\big( {\big( a(u_{\lambda}^{\prime} -
u_{\mu}^{\prime})\big) }^{\prime}, u_{\lambda}- u_{\mu}\big) \, dt
= \int_0^Tb(A_{\lambda}u_{\lambda} - A_{\mu}u_{\mu} + \lambda
u_{\lambda} - \mu u_{\mu}, u_{\lambda} - u_{\mu})\, dt,
\end{equation}
which implies
\begin{equation}\nonumber
-\int_0^Ta{\Vert u_{\lambda}^{\prime} - u_{\mu}^{\prime}
\Vert}^2dt = \int_0^Tb(A_{\lambda}u_{\lambda} - A_{\mu}u_{\mu},
J_{\lambda} u_{\lambda} - J_{\mu} u_{\mu})\, dt +
\end{equation}
\begin{equation}\label{long}
\int_0^Tb(A_{\lambda}u_{\lambda} - A_{\mu}u_{\mu},\lambda
A_{\lambda}u_{\lambda} - \mu A_{\mu}u_{\mu} )\, dt +\int_0^T b
(\lambda u_{\lambda} - \mu u_{\mu},u_{\lambda} - u_{\mu})\, dt.
\end{equation}
The first term of the right hand side of \eqref{long} is
nonegative since $A_{\lambda}u_{\lambda}(t) \in
AJ_{\lambda}u_{\lambda}(t)$, so by the information above we easily obtain
\begin{equation}
{\Vert u_{\lambda}^{\prime} - u_{\mu}^{\prime}  \Vert}_{X_T}^2 \le
C_{12} (\lambda + \mu ).
\end{equation}
We also have
\begin{align}\nonumber
\Vert u_{\lambda}(t) - u_{\mu}(t) \Vert = & \Vert \int_0^t (u_{\lambda}^{\prime} - u_{\mu}^{\prime})\, dt \Vert  \\
\le & C_{13} {\Vert u_{\lambda}^{\prime} - u_{\mu}^{\prime}
\Vert}_{X_T}, \ 0\le t \le T.
\end{align}
Therefore, there exists $u\in W^{2,2}(0,T; H)$, such that
$u_{\lambda} \rightarrow u$ in $C([0,T];H)$, $u_{\lambda}^{\prime}
\rightarrow u^{\prime}$ strongly in $X_T$, and
$u_{\lambda}^{\prime \prime} \rightarrow u^{\prime \prime}$ weakly
in $X_T$, as $\lambda \rightarrow 0^+$. Obviously, $u(0)=x$ and
$u(T)=0$. This $u$ is also a solution to equation
\eqref{boundedinterval}. Indeed, we can pass to the limit as
$\lambda \rightarrow 0^+$ in \eqref{lambda} regarded as an
equation in $X_T$. Note that $A_{\lambda}u_{\lambda}(t) \in
AJ_{\lambda}u_{\lambda}(t)$, for all $t\in [0,T]$, and
\begin{equation}\nonumber
{\Vert J_{\lambda}u_{\lambda} -u\Vert}_{X_T} \le \lambda{\Vert
A_{\lambda}u_{\lambda}\Vert}_{X_T} + {\Vert u_{\lambda} - u
\Vert}_{X_T} \rightarrow 0, \ \text{ as } \lambda \rightarrow 0^+,
\end{equation}
so by the demiclosedness of $\bar{A}$, the weak limit in $X_T$ of
$A_{\lambda}u_{\lambda}$ belongs to $\bar{A}u$, i.e.,
\begin{equation}\nonumber
-f+pu^{\prime \prime} + qu^{\prime} \in Au \ \text{ for a.a. }
t\in (0,T).
\end{equation}
It remains to prove that $u$ is unique. Let $v\in W^{2,2}(0,T;H)$
be another solution of problem \eqref{boundedinterval},
\eqref{BC}. We have
\begin{equation}\label{unique}
{\big( a(u^{\prime} - v^{\prime})\big)}^{\prime} \in b(Au - Av) \
\text{ for a.a. } t\in (0,T).
\end{equation}
Multiplying \eqref{unique} by $u(t) - v(t)$ and integrating over
$[0,T]$ we obtain
\begin{equation}\nonumber
-\int_0^Ta{\Vert u^{\prime} - v^{\prime} \Vert}^2dt \ge 0,
\end{equation}
which shows that $u^{\prime} - v^{\prime} \equiv 0$, i.e., $u-v$
is a constant function. Since $u(0)-v(0)=0$, it follows that
$u\equiv v$.
\end{proof}

\begin{remark}
For similar results we refer to \cite{AP}. Note that here $p$ and
$q$ satisfy weaker conditions. Note further that, not only the
conclusion of Lemma \ref{second}, but also some steps of its proof
will be used later.
\end{remark}

\begin{lemma}\label{third}
Assume that $A$ satisfies $(H1)$, $p,\, q\in L^{\infty}(0,T)$,
with $\mathrm{ess} \inf \ p>0$, $f\in L^2(0,T;H)$, and $x,\, y\in
D(A)$. For $\lambda >0$ denote by $u_{\lambda}$ the unique
solution of
\begin{equation} \label{bdd,lambda}
p(t)u_{\lambda}^{\prime \prime}(t) + q(t)u_{\lambda}^{\prime}(t) =
A_{\lambda}u_{\lambda}(t) + f(t) \ \ \text{for a.a.} \ t\in (0,T),
\end{equation}
\begin{equation}\label{BC,lambda}
u_{\lambda}(0)=x, \ u_{\lambda}(T)=y
\end{equation}
(which exists by Lemma \ref{second}). Then, $u_\lambda \rightarrow
u$ in $C([0,T];H)$ as $\lambda \rightarrow 0^+$, where $u$ is the
solution of problem \eqref{boundedinterval}, \eqref{BC}. Moreover,
$u_{\lambda}^{\prime} \rightarrow u^{\prime}$ in $C([0,T];H)$ and
$u_{\lambda}^{\prime \prime} \rightarrow u^{\prime \prime}$ weakly
in $X_T$, as $\lambda \rightarrow 0^+$.
\end{lemma}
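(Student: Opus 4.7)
The plan is a Yosida-type approximation argument, parallel to but distinct from the one inside the proof of Lemma \ref{second}: here the role of the approximating operator is played directly by $A_\lambda$, with no extra $\lambda I$ regularization needed. Since $A_\lambda$ is single-valued, Lipschitz, and maximal monotone with $A_\lambda 0=0$, it satisfies $(H1)$; applying Lemma \ref{second} with $A$ replaced by $A_\lambda$ already produces the unique $u_\lambda\in W^{2,2}(0,T;H)$ solving \eqref{bdd,lambda}--\eqref{BC,lambda}. My aim is to pass to the limit $\lambda\to 0^+$ and identify the limit with the unique solution $u$ of \eqref{boundedinterval}--\eqref{BC}.

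I would first rederive uniform bounds on $u_\lambda$, $u_\lambda'$, $u_\lambda''$, and $A_\lambda u_\lambda$ by essentially repeating the estimates \eqref{ineq}--\eqref{primeprime}, with the harmless simplification that no zeroth-order $\lambda u_\lambda$ term appears. Writing \eqref{bdd,lambda} as $(au_\lambda')'=b(A_\lambda u_\lambda+f)$, pairing with $u_\lambda$, and applying the Gronwall--Bellman lemma yield an $L^\infty$ bound as in \eqref{bdd}; pairing next with $A_\lambda u_\lambda$, using $\Vert A_\lambda x\Vert\le \Vert A^0 x\Vert$, and combining with the identity $\int_0^T(T-t)u_\lambda''(t)\,dt=-Tu_\lambda'(0)-x+y$ produce uniform $X_T$-bounds on $A_\lambda u_\lambda$, $u_\lambda'$, $u_\lambda''$, and on $u_\lambda'(0)$, for $\lambda\in(0,\lambda_0]$.

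Next, subtracting the equations for $u_\lambda$ and $u_\mu$, pairing with $u_\lambda-u_\mu$ (which vanishes at both endpoints), and integrating by parts as in \eqref{long}, the monotonicity inequality $(A_\lambda u_\lambda - A_\mu u_\mu,J_\lambda u_\lambda - J_\mu u_\mu)\ge 0$ together with $\Vert u_\lambda - J_\lambda u_\lambda\Vert=\lambda\Vert A_\lambda u_\lambda\Vert$ yields
\[
\Vert u_\lambda'-u_\mu'\Vert_{X_T}^2 \le C(\lambda+\mu).
\]
Integrating $u_\lambda(t)-u_\mu(t)=\int_0^t(u_\lambda'-u_\mu')\,ds$ and applying Cauchy--Schwarz shows that $\{u_\lambda\}$ is Cauchy in $C([0,T];H)$. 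Extracting a weak $X_T$-limit for $\{u_\lambda''\}$ and noting that $\Vert J_\lambda u_\lambda - u_\lambda\Vert_{X_T}\to 0$, the demiclosedness of $\bar A$ lets me pass to the limit in \eqref{bdd,lambda} and identify the limit with the unique $u$ of Lemma \ref{second}; the whole family (not just a subsequence) therefore converges.

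The step I expect to require the most care is the upgrade of $u_\lambda'\to u'$ from strong $X_T$-convergence to strong $C([0,T];H)$-convergence. Since $\{u_\lambda'\}$ is bounded in $W^{1,2}(0,T;H)$ (the $X_T$-bound on $u_\lambda''$ being equivalent to an $L^2$-bound under $(H2)$), I would apply the one-dimensional interpolation inequality
\[
\Vert v\Vert_{C([0,T];H)}\le C\bigl(\Vert v\Vert_{L^2(0,T;H)}^{1/2}\Vert v'\Vert_{L^2(0,T;H)}^{1/2}+\Vert v\Vert_{L^2(0,T;H)}\bigr)
\]
to $v=u_\lambda'-u_\mu'$: the second factor stays bounded while the first tends to zero, so $\{u_\lambda'\}$ is Cauchy in $C([0,T];H)$ and its limit must coincide with $u'$.
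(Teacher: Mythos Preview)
Your proposal is correct and follows essentially the same route as the paper, whose proof simply states that ``following a procedure similar to that used in the proof of Lemma \ref{second}'' one obtains $u_\lambda\to u$ in $C([0,T];H)$, $u_\lambda'\to u'$ strongly in $X_T$, and $u_\lambda''\to u''$ weakly in $X_T$. The only cosmetic difference is in the final upgrade of $u_\lambda'\to u'$ from $X_T$ to $C([0,T];H)$: you use the interpolation inequality $\Vert v\Vert_{C}\le C(\Vert v\Vert_{L^2}^{1/2}\Vert v'\Vert_{L^2}^{1/2}+\Vert v\Vert_{L^2})$, while the paper invokes ``Arzel\`a's criterion'' (the uniform $L^2$-bound on $u_\lambda''$ gives equicontinuity of $u_\lambda'$, which together with the already-established $L^2$-convergence forces uniform convergence), and these two devices are equivalent here.
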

\begin{proof}
Following a procedure similar to that used in the proof of Lemma
\ref{second}, we obtain $u_\lambda \rightarrow u$ in $C([0,T];H)$,
$u_{\lambda}^{\prime} \rightarrow u^{\prime}$ strongly in $X_T$,
and $u_{\lambda}^{\prime \prime} \rightarrow u^{\prime \prime}$
weakly in $X_T$, as $\lambda \rightarrow 0^+$. So actually
$u_{\lambda}^{\prime} \rightarrow u^{\prime}$ in $C([0,T];H)$ (by
Arzel\`{a}'s criterion).
\end{proof}

\section{Main Results}
We start this section by defining the concepts of strong and weak
solution for equation \eqref{E} (respectively, equation \eqref{E}
plus condition \eqref{B}) we shall use in what follows.

Note that in general we shall work under our assumptions $(H1)$
and $(H2)$ introduced in Section 1. For an interval $J\subset
\mathbb{R}$, open or not, denote by $L^p_{loc}(J;H)$
(resp. $W^{k,p}_{loc}(J;H)$) the space of all $H$-valued functions
defined on $J$, whose restrictions to compact intervals
$[a,b]\subset J$ belong to $L^p(a,b;H)$ (respectively, to
$W^{k,p}(a,b;H)$).
\begin{definition}
Let $f\in L^2_{loc}([0,\infty ); H)$ and let $x\in
\overline{D(A)}$. A $H$-valued function $u=u(t)$ is said to be a
{\bf strong solution} of equation \eqref{E} (respectively, of
equation \eqref{E} plus condition \eqref{B}) if $u\in C([0,\infty
); H)\cap W^{2,2}_{loc}((0,\infty ); H)$ and $u(t)$ satisfies
equation \eqref{E} for a.a. $t>0$ (and, in addition, $u(0)=x$,
respectively).
\end{definition}
Denote $Y=L^1(0,\infty ;H; t\sqrt{a_-(t)}dt)$, where $a_-(t)=
\exp{\Big( -\int_0^t\frac{q^{-}(\tau)}{p(\tau)}\, d\tau  \Big)}$.
Obviously, $Y$ is real Banach space with respect to the norm
$$
{\Vert f \Vert}_Y = \int_0^{\infty}\Vert f(t) \Vert
t\sqrt{a_-(t)}\, dt.
$$

If $f\in Y$ we cannot expect in general existence of strong
solutions for \eqref{E}, so we need the following definition.
\begin{definition}
Let $f\in Y$ and let $x\in \overline{D(A)}$. A $H$-valued function
$u=u(t)$ is said to be a {\bf weak solution} of equation \eqref{E}
(respectively, of equation \eqref{E} plus condition \eqref{B}) if
there exist sequences $u_n\in C([0,\infty ); H)\cap
W^{2,2}_{loc}((0,\infty ); H)$ and $f_n\in Y\cap
L^2_{loc}([0,\infty ); H)$, such that: $(i)$ $f_n$ converges to
$f$ in $Y$; $(ii)$   $u_n(t)$ satisfies equation \eqref{E} with
$f=f_n$ for a.a. $t>0$ and all $n\in \mathbb{N}$; and $(iii)$
$u_n$ converges uniformly to $u$ on any compact interval $[0,T]$
(and, in addition, $u(0)=x$, respectively).
\end{definition}
The concept of a weak solution for such second order differential
inclusions was previously introduced in \cite{KM} in the case
$p\equiv 1$ and $q\equiv 0$.

Note that the couple \eqref{E}, \eqref{B} is an incomplete
problem. While in \cite{GM2013}, \cite{GM2014} we added the
condition $u\in L^2({\mathbb{R}}_+ ; H; b(t)dt)$, in this paper we
consider a boundedness condition on ${\mathbb{R}}_+$: $\sqrt{a_-}{\Vert
u \Vert} \in L^{\infty}({\mathbb{R}}_+)$.

\begin{theorem}\label{t1}
Assume $(H1)$ and $(H2)$ hold. If $x\in \overline{D(A)}$, and
$f\in Y\cap L^2_{loc}([0,\infty ); H)$, then there exists a unique strong
solution $u$ of \eqref{E}, \eqref{B} which satisfies
\begin{equation}\tag{$C$}\label{C}
\sup_{t\ge 0} {a_-(t)}{\Vert u(t) \Vert}^2 < \infty . 
\end{equation}
Moreover, $\sqrt{ta_-}u^{\prime}
\in L^2({\mathbb{R}}_+;H)$ and $t^{3/2}u^{\prime \prime} \in
L^2_{loc}([0, \infty );H)$. If, in addition, $x\in D(A)$, then 
$u\in W^{2,2}_{loc}([0,\infty );H)$.
\end{theorem}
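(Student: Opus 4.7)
I would construct $u$ by a limiting process on two-point boundary value problems on $[0,n]$, following the strategy of \cite{KM} (case $p\equiv 1$, $q\equiv 0$). For each integer $n\ge 1$, I pick $x_n\in D(A)$ with $x_n\to x$ in $H$ (e.g.\ $x_n=J_{1/n}x$) and apply Lemma~\ref{second} to obtain $u_n\in W^{2,2}(0,n;H)$ satisfying $pu_n''+qu_n'\in Au_n+f$ on $(0,n)$, $u_n(0)=x_n$, $u_n(n)=0$. The desired $u$ is to arise as a subsequential limit of $\{u_n\}$ as $n\to\infty$.

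\textbf{Key a priori estimate for~\eqref{C}.} I would repeat the Gronwall argument from the proof of Lemma~\ref{second} (passing $\lambda\to 0^+$ via Lemma~\ref{third}) to obtain, for $0\le t\le n$,
\[
a(t)\Vert u_n(t)\Vert^2 \le \Big(\Vert x_n\Vert^2+2\int_0^n\tau b(\tau)\Vert f(\tau)\Vert\,\Vert u_n(\tau)\Vert\,d\tau\Big)\exp\!\Big(\int_0^t\tfrac{q^+}{p}\Big).
\]
Setting $a_+(t):=\exp(\int_0^tq^+/p)$, which is bounded above by some $a_\infty<\infty$ by $(H2)$, and using $a=a_+a_-$, division by $a_+(t)$ yields
\[
a_-(t)\Vert u_n(t)\Vert^2 \le \Vert x_n\Vert^2+2\int_0^n\tau b\Vert f\Vert\,\Vert u_n\Vert\,d\tau.
\]
Put $C_n:=\sup_{0\le t\le n}\sqrt{a_-(t)}\Vert u_n(t)\Vert$, so that $\Vert u_n(\tau)\Vert\le C_n/\sqrt{a_-(\tau)}$; since $b=(a_+/p)a_-$ with $K_0:=\Vert a_+/p\Vert_\infty<\infty$, the integrand is dominated by $C_n K_0\,\tau\sqrt{a_-(\tau)}\Vert f(\tau)\Vert$, whose integral over $\mathbb{R}_+$ is exactly $C_n K_0\Vert f\Vert_Y$. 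Taking $\sup_t$ gives the quadratic inequality $C_n^2\le\Vert x_n\Vert^2+2C_n K_0\Vert f\Vert_Y$, hence $\sup_n C_n<\infty$, which is~\eqref{C} uniformly in $n$. The weighted bounds $\int_0^\infty ta_-\Vert u_n'\Vert^2\,dt\le K_1$ and $\int_0^T t^3\Vert u_n''\Vert^2\,dt\le K_2(T)$ will come from applying the multipliers $tu_\lambda'$ and $A_\lambda u_\lambda$ to the Yosida-regularized equation $(au_\lambda')'=b(A_\lambda u_\lambda+\lambda u_\lambda+f)$ and sending $\lambda\to 0^+$. When $x\in D(A)$ one can take $x_n\equiv x$: no boundary layer at $t=0$ arises and $u\in W^{2,2}_{loc}([0,\infty);H)$.

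\textbf{Passage to the limit and uniqueness.} Arzel\`a--Ascoli applied to $\{u_n\}$ (equicontinuity from the $u_n'$ bound) together with weak compactness of $\{u_n''\}$ in $L^2(\varepsilon,T;H)$ should yield a subsequence converging to $u\in C([0,\infty);H)\cap W^{2,2}_{loc}((0,\infty);H)$ with $u(0)=x$; demiclosedness of $\bar A$ on each $[\varepsilon,T]$ gives $pu''+qu'\in Au+f$ a.e. The a priori estimates pass to the limit, establishing~\eqref{C}, $\sqrt{ta_-}\,u'\in L^2(\mathbb{R}_+;H)$ and $t^{3/2}u''\in L^2_{loc}$. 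For uniqueness, if $u,v$ are two strong solutions satisfying~\eqref{C}, put $w=u-v$: monotonicity of $A$ yields $((aw')',w)\ge 0$, and integrating from $0$ to $T$ (using $w(0)=0$) gives $a(T)(w'(T),w(T))\ge\int_0^T a\Vert w'\Vert^2\,dt$. Because $\int_0^\infty ta_-\Vert w'\Vert^2\,dt<\infty$, one can extract $T_k\to\infty$ with $a_-(T_k)\Vert w'(T_k)\Vert^2\to 0$; combined with $\Vert w(T_k)\Vert\le C/\sqrt{a_-(T_k)}$ from~\eqref{C} and $a(T_k)\le a_\infty a_-(T_k)$, the left-hand side above tends to $0$, so $\int_0^\infty a\Vert w'\Vert^2\,dt=0$ and $w\equiv 0$.

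\textbf{Main obstacle.} The hard step will be the weighted $L^2$-estimate on $u_n'$ with weight $ta_-$. The multiplier must respect the multivalued nature of $A$ (forcing one to work with the Yosida regularization $A_\lambda$, where monotonicity supplies $((A_\lambda u_\lambda)',u_\lambda')\ge 0$) while the mere $L^\infty$-regularity of $p,q$ forbids differentiating them, so one must stay in the divergence form $(au')'=b(\cdot)$ throughout, with only $a$ absolutely continuous. The time factor $t$ is the standard second-order smoothing weight that absorbs the singularity at $t=0$ when $x\in\overline{D(A)}\setminus D(A)$, and accounts for the weight $t^{3/2}$ on $u''$; the factor $a_-$ is dictated by~\eqref{C} and is the real novelty relative to the constant-coefficient case of~\cite{KM}.
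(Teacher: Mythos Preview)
Your overall architecture (solve on $[0,n]$, derive uniform weighted bounds, pass to the limit, use demiclosedness) matches the paper's, and your derivation of the quadratic inequality for $C_n$ is exactly right. However, two steps are genuine gaps.

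\textbf{Passage to the limit.} Arzel\`a--Ascoli does not give strong convergence of $\{u_n\}$ in $C([0,R];H)$ when $H$ is infinite dimensional: equicontinuity plus boundedness yields only weak-$*$ relative compactness in $C([0,R];H_{\mathrm{weak}})$, and weak convergence of $u_n$ is not enough to invoke demiclosedness of $\bar A$ (you need $u_n\to u$ strongly and $pu_n''+qu_n'-f\rightharpoonup g$ weakly). The paper does \emph{not} use compactness here; it shows directly that $(u_n')$ is Cauchy in $L^2(0,R;H)$ via the energy identity
\[
\tfrac12\tfrac{d}{dt}\Big[a\tfrac{d}{dt}\Vert u_n-u_m\Vert^2\Big]\ge a\Vert u_n'-u_m'\Vert^2,
\]
multiplied by $(m-t)$ and integrated over $[0,m]$, which kills the unknown derivative at $t=m$ and leaves a right-hand side bounded uniformly in $m<n$. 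This Cauchy argument is the missing ingredient. (Your diagonal choice $x_n\to x$ is workable but adds boundary terms $\Vert x_n-x_m\Vert^2$; the paper separates the two limits, first $n\to\infty$ with $x\in D(A)$ fixed, then $x_k\to x$, using the contraction estimate $\Vert u_{kn}(t)-u_{jn}(t)\Vert\le\Vert x_k-x_j\Vert$.)

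\textbf{Uniqueness.} Your argument uses $\int_0^\infty ta_-\Vert w'\Vert^2\,dt<\infty$ to extract a good sequence $T_k$, but this integrability is a \emph{conclusion} for the constructed solution, not a hypothesis for an arbitrary strong solution $v$ satisfying only \eqref{C}. You must derive $\int_0^\infty a\Vert w'\Vert^2\,dt<\infty$ from \eqref{C} alone. The paper does this by first observing that $\psi(t):=a(t)\frac{d}{dt}\Vert w(t)\Vert^2$ is nondecreasing and $\ge 0$, then integrating $\psi(t)\le\psi(s)$ in $s$ over $[t,T]$ to get
\[
(T-t)\psi(t)\le a(T)\Vert w(T)\Vert^2+\int_t^T\tfrac{q^-}{p}a\Vert w\Vert^2\,ds\le K+M(T-t),
\]
where the bound uses only \eqref{C} and $q^-/p\in L^\infty$. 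Dividing by $T-t$ and letting $T\to\infty$ gives $\psi(t)\le M$ for all $t$; combining with $\tfrac12\psi'(t)\ge a\Vert w'\Vert^2$ then yields $\int_0^\infty a\Vert w'\Vert^2\le M/2$, after which your endpoint argument goes through. Without this intermediate step your uniqueness proof is circular.
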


\begin{proof}
Let us assume in a first stage that $x\in D(A)$ (and $f\in Y\cap
L^2_{loc}([0,\infty ); H)$, as specified in the statement of the
theorem). For each $\lambda >0$ and $n\in \mathbb{N}$, denote by
$u_{n\lambda}, \, u_n$ the solutions of the following problems
\begin{equation}\label{unlambda}
pu_{n\lambda}^{\prime \prime} + qu_{n\lambda}^{\prime} =
A_{\lambda}u_{n\lambda} + f \ \text{ a.e. in } (o,n),
\end{equation}
\begin{equation}\label{BCunlambda}
u_{n\lambda}(0)=x, \ \ u_{n\lambda}(n)=0,
\end{equation}
and
\begin{equation}\label{un}
pu_{n}^{\prime \prime} + qu_{n}^{\prime} \in Au_{n} + f \ \text{
a.e. in } (o,n),
\end{equation}
\begin{equation}\label{BCun}
u_{n}(0)=x, \ \ u_{n}(n)=0.
\end{equation}
Lemma \ref{second} ensures the existence and uniqueness of
$u_{n\lambda}, \, u_n\in W^{2,2}(0,n;H)$. By a computation similar
to that performed in Lemma \ref{second} (see \eqref{hehe}), we get
\begin{equation}\label{inequ}
a_-(t){\Vert u_{n\lambda}(t)\Vert}^2 \le {\Vert x \Vert}^2 +
2\int_0^n\tau b \Vert f\Vert \cdot \Vert u_{n\lambda} \Vert \,
d\tau , \ 0\le t \le n.
\end{equation}
Denoting $M_{n\lambda}= {\sup}_{0\le t\le n}\sqrt{a_-(t)}\Vert 
u_{n\lambda}(t)\Vert $, we can derive from \eqref{inequ} the
following quadratic inequality
\begin{align}\label{Mnlambda} \nonumber
M_{n\lambda}^2 \le & \ {\Vert x \Vert}^2 +
2M_{n\lambda}\int_0^{\infty}\frac{\tau}{p} a_+ \sqrt{a_-} \Vert
f\Vert \, d\tau \\
\le & \ {\Vert x \Vert}^2 + 2\frac{a_+(\infty )}{p_0}{\Vert f \Vert}_YM_{n\lambda},
\end{align}
where $a_+(t) = \exp{ \big( \int_0^t\frac{q^+}{p} \, d\tau \big)}$ and $p_0 = \mathrm{ess} \inf \ p>0$, 
which shows that $M_{n\lambda}\le E =E(x,f):=D+\sqrt{D^2 + {\Vert
x \Vert}^2}$, $D:= \frac{a_+(\infty )}{p_0}{\Vert f \Vert}_Y$. Thus, 
\begin{equation}\label{bound_unlambda}
\sup_{0\le t \le n}{a_-(t)}{\Vert u_{n\lambda}(t) \Vert}^2 \le
E^2.
\end{equation}
Similarly, it follows from \eqref{un}, \eqref{BCun}
\begin{equation}\label{bound_un}
\sup_{0\le t \le n}{a_-(t)}{\Vert u_{n}(t) \Vert}^2 \le E^2.
\end{equation}
Now, let $0<R<m<n$, with $ m,n\in \mathbb{N}$. For a.a. $t\in
(0,m)$ we have
\begin{align} \label{ine} \nonumber
\frac{1}{2}\frac{d}{dt}\Big[ \, a\frac{d}{dt}{\Vert u_n -
u_m\Vert}^2  \, \Big] = & \frac{d}{dt}\big[ \, a(u_n^{\prime} -
u_m^{\prime}, u_n - u_m) \, \big] \\ \nonumber
= & \big( {\big( a(u_n^{\prime} - u_m^{\prime})\big)}^{\prime}, u_n - u_m\big)  + a {\Vert u_n^{\prime} - u_m^{\prime} \Vert}^2 \\
\ge &  a {\Vert u_n^{\prime} - u_m^{\prime} \Vert}^2.
\end{align}
By \eqref{ine} we have
\begin{align} \label{haha}\nonumber
\int_0^m(m-t)a(t){\Vert u_n^{\prime} - u_m^{\prime} \Vert}^2dt \le
\ & \frac{1}{2}\int_0^m(m-t)\frac{d}{dt}\big[ \,
a\frac{d}{dt}{\Vert u_n - u_m\Vert}^2  \, \big]\, dt \\ \nonumber
= \ & 0 + \frac{1}{2}\int_0^m a(t)\frac{d}{dt}{\Vert u_n -
u_m\Vert}^2dt \\ \nonumber
= \ & \frac{1}{2}a(t){\Vert u_n - u_m\Vert}^2\, |_0^m - \frac{1}{2}\int_0^m\frac{aq}{p}{\Vert u_n - u_m\Vert}^2dt \\
\le \ & \frac{1}{2}a(m){\Vert u_n(m) \Vert}^2
+\frac{1}{2}\int_0^m\frac{aq^-}{p}{\Vert u_n - u_m\Vert}^2dt.
\end{align}
As in the proof of Lemma \ref{second}, we can derive an inequality
for $u_k$ similar to \eqref{qminus} and therefore (see
\eqref{bound_un}) we have for all $t\in [0,k]$
\begin{align} \label{evrika}\nonumber
\int_0^t \frac{aq^-}{p}{\Vert u_{k}\Vert}^2d\tau \le \ &
\big( {\Vert x \Vert}^2 + 2\int_0^k\tau b \Vert f\Vert \cdot \Vert u_{k} \Vert \, d\tau \big) +  \int_0^t \frac{aq^+}{p}{\Vert u_{k}\Vert}^2d\tau \\
\le \ & \big( {\Vert x \Vert}^2 + 2DE \big) +  E^2a_+(\infty)\Big(
\int_0^{\infty}\frac{q^+}{p}\, d\tau \Big) :=F<\infty.
\end{align}
Combinig \eqref{bound_un}, \eqref{haha} and \eqref{evrika} leads
to
\begin{align}\label{good} \nonumber
(m-R)\int_0^Ra(t){\Vert u_n^{\prime} - u_m^{\prime} \Vert}^2dt \le
\ &
\int_0^m(m-t)a(t){\Vert u_n^{\prime} - u_m^{\prime} \Vert}^2dt \\
\le \ & \frac{1}{2}E^2a_+(\infty) + 2F.
\end{align}
This shows that $(u_n^{\prime})$ is a Cauchy sequence in $X_R =
L^2(0,R;H)$. Therefore $(u_n)$ is Cauchy in $C([0,R];H)$, since
\begin{equation}\nonumber
\Vert u_n(t) - u_m(t)\Vert = \Vert \int_0^t \big[
u_n^{\prime}(\tau) - u_m^{\prime}(\tau) \big] \, d\tau \Vert \le
Const.\, {\Vert u_n^{\prime} - u_m^{\prime} \Vert}_{X_R}, \ 0\le t
\le R.
\end{equation}
Since $R$ was arbitrarily chosen, it follows that there exists a
function $u\in C([0,\infty);H)$ $\cap W^{1,2}_{loc}([0,\infty );
H)$, such that $u_n\rightarrow u$ in $C([0,R];H)$ (so $u(0)=x$)
and $u_n^{\prime}\rightarrow u^{\prime}$ in $X_R$, for all $R>0$.
In addition, $a_-{\Vert u \Vert}^2 \in L^{\infty}({\mathbb{R}}_+)$
(cf. \eqref{bound_un}).

We intend to show that $u$ is a strong solution to equation $(E)$
by passing to the limit in equation \eqref{un} as $n\rightarrow
\infty$. To this purpose we  establish next a local $L^2$-estimate
for $u_n^{\prime \prime}$. By a resoning from the proof of Lemma
\ref{second} (see \eqref{hoho}), we have
\begin{equation}\label{hahaha}
\int_0^na{\Vert u_{n\lambda}^{\prime}\Vert}^2 dt\le
-(u_{\lambda}^{\prime}(0), x) + \int_0^nb\Vert f \Vert \cdot \Vert
u_{n\lambda}\Vert \, dt.
\end{equation}
For a given $R>0$, arbitrary but fixed, we derive from
\eqref{bound_unlambda} and \eqref{hahaha}
\begin{align}\label{hihi}\nonumber
\int_0^Ra{\Vert u_{n\lambda}^{\prime}\Vert}^2 dt\le & \ \Vert x
\Vert \cdot \Vert u_{n\lambda}^{\prime}(0)\Vert +  \Big( \int_0^R
+ \int_R^n \Big)b\Vert f \Vert \cdot \Vert u_{n\lambda}\Vert \, dt
\\ \nonumber \le & \ \Vert x \Vert \cdot \Vert
u_{n\lambda}^{\prime}(0)\Vert + E a_+(\infty ) \Big( \int_0^R
\frac{\sqrt{a_-}}{p}\Vert f \Vert \, dt +
\frac{1}{R}\int_R^{\infty}
\frac{t\sqrt{a_-}}{p}\Vert f\Vert \, dt \Big) \\
\le & \ \Vert x \Vert \cdot \Vert u_{n\lambda}^{\prime}(0)\Vert +
K_1,
\end{align}
where $K_1$ is a positive constant (depending on $R$, $x$, and
$f$). Now, multiplying the equation ${\big( au_{n\lambda}^{\prime}
\big)}^{\prime} = b(A_{\lambda}u_{n\lambda}+ f)$ by
$(R-t)^3A_{\lambda}u_{n\lambda}$ and integrating over $[0,R]$, we
obtain
\begin{align}\label{ah}\nonumber
\ \ & \int_0^R(R-t)^3b{\Vert A_{\lambda}u_{n\lambda}\Vert}^2dt \\
\nonumber \ \ = & \ \int_0^R\big(  {\big( au_{n\lambda}^{\prime}
\big)}^{\prime}, (R-t)^3 A_{\lambda}u_{n\lambda}  \big)\, dt
-\int_0^R(R-t)^3b(f, A_{\lambda}u_{n\lambda})\, dt \\ \nonumber \
\ = & \ -R^3(u_{n\lambda}^{\prime}(0), A_{\lambda}x) -
\int_0^R(R-t)^3a\big(
u_{n\lambda}^{\prime}, {\big( A_{\lambda}u_{n\lambda} \big)}^{\prime} \big) \, dt \\
\nonumber \ \ \ \ & + 3\int_0^R(R-t)^2a(u_{n\lambda}^{\prime},
A_{\lambda}u_{n\lambda})\, dt -\int_0^R(R-t)^3b(f,
A_{\lambda}u_{n\lambda})\, dt \\ \nonumber \ \  \le & \ R^3\Vert
A^0x\Vert\cdot \Vert u_{n\lambda}^{\prime}(0)\Vert -
\{\text{nonnegative term} \} \\ \nonumber \ \ \ \ &
+3\int_0^Ra\big( (R-t)^{\frac{1}{2}}u_{n\lambda}^{\prime},
(R-t)^{\frac{3}{2}} A_{\lambda}u_{n\lambda} \big)\, dt +
\int_0^R(R-t)^{\frac{3}{2} + \frac{3}{2}}b\Vert f \Vert \cdot
\Vert A_{\lambda}u_{n\lambda}\Vert \, dt \\ \nonumber \ \ \le & \
R^3\Vert A^0x\Vert\cdot \Vert u_{n\lambda}^{\prime}(0)\Vert + 3
{\Big( \int_0^R(R-t)a{\Vert u_{n\lambda}^{\prime}\Vert}^2
\Big)}^{\frac{1}{2}}
{\Big( \int_0^R(R-t)^3a{\Vert A_{\lambda}u_{n\lambda}\Vert}^2 \Big)}^{\frac{1}{2}} \\
\ \ \ \ & +{\Big( \int_0^R(R-t)^3b{\Vert
A_{\lambda}u_{n\lambda}\Vert}^2dt \Big)}^{\frac{1}{2}} {\Big(
\int_0^R(R-t)^3b{\Vert f \Vert}^2dt \Big)}^{\frac{1}{2}}.
\end{align}
By \eqref{hihi} and \eqref{ah} it follows that
\begin{equation}\label{hhh}
\int_0^R(R-t)^3b{\Vert A_{\lambda}u_{n\lambda}\Vert}^2dt \le
K_2\Vert u_{n\lambda}^{\prime}(0)\Vert + K_3.
\end{equation}
Denoting $Z_R=L^2(0,R/2;H)$, we derive from \eqref{hihi} and
\eqref{hhh}
\begin{equation}\label{ok}
{\Vert u_{n\lambda}^{\prime}\Vert}^2_{Z_R} \le K_4\Vert
u_{n\lambda}^{\prime}(0)\Vert + K_5, \ \ {\Vert
A_{\lambda}u_{n\lambda}\Vert}^2_{Z_R}\le K_6\Vert
u_{n\lambda}^{\prime}(0)\Vert + K_7,
\end{equation}
so that, according to \eqref{unlambda}, we also have
\begin{equation}\label{okok}
{\Vert u_{n\lambda}^{\prime \prime}\Vert}^2_{Z_R} \le K_8\Vert
u_{n\lambda}^{\prime}(0)\Vert + K_9.
\end{equation}
On the other hand,
\begin{equation}\nonumber
\frac{R}{2}u_{n\lambda}^{\prime}(0)= u_{n\lambda}(R/2) -x -
\int_0^{R/2} \Big( \frac{R}{2} -t \Big) u_{n\lambda}^{\prime
\prime}(t)\, dt,
\end{equation}
hence (see \eqref{bound_unlambda} and \eqref{okok})
\begin{align}\label{xxx}\nonumber
\Vert u_{n\lambda}^{\prime}(0)\Vert
\le & \ K_{10}{\Vert u_{n\lambda}^{\prime \prime}\Vert}_{Z_R} + K_{11} \\
\le & \ K_{12}\sqrt{\Vert u_{n\lambda}^{\prime}(0)\Vert} + K_{13},
\end{align}
which shows that $\Vert u_{n\lambda}^{\prime}(0)\Vert \le K_{14}$,
where $K_{14}$ is a constant depending on $R$, $x$, and $f$.
Therefore, according to \eqref{okok}, we have
\begin{equation}\label{aaar}
{\Vert u_{n\lambda}^{\prime \prime}\Vert}^2_{Z_R} \le K_{15}
:=K_8K_{14} + K_9.
\end{equation}
By Lemma \ref{third} we know that, for each $n\in \mathbb{N}$,
$u_{n\lambda}^{\prime \prime}$ converges weakly in $L^2(0,n;H)$
(hence in particular in $Z_R$) to $u_n^{\prime \prime}$ as
$\lambda \rightarrow 0^+$. This piece of information combined with
\eqref{aaar} shows that $(u_n^{\prime \prime})$ is bounded in
$Z_R$. Now, we are in  a position to take to the limit in
\eqref{un}, regarded as an equation in $Z_R$, to deduce that $u$
(the limit of $u_n$ in $C([0,R/2];H)$, hence in $Z_R$) belongs to
$W^{2,2}(0,R/2;H)$ and satisfies equation \eqref{E} for a.a. $t\in
(0,R/2)$. We have used the demiclosedness of the realization of
$A$ in $Z_R$ (see also the proof of Theorem 3.1 in \cite{KM}).
Since $R$ was arbitrarily chosen, this completes the proof of the
theorem in the case $x\in D(A)$.

Now we assume that $x\in \overline{D(A)}$ (and $f\in Y\cap
L^2_{loc}([0,\infty ); H)$, as specified in the statement of the
theorem). Let $x_k\in D(A)$ such that $\Vert x_k - x \Vert
\rightarrow 0$. For each $k$ denote by $u_k$ the strong solution
of \eqref{E} satisfying $u_k(0)=x_k$ and $\sqrt{a_-}\Vert u_k
\Vert \in L^{\infty}({\mathbb{R}}_+)$ (whose existence is ensured
by the previous part of the proof). For each $k$ let $u_{kn}$,
$u_{kn\lambda}$ be the corresponding approximations (see problems
\eqref{un}, \eqref{BCun} and \eqref{unlambda}, \eqref{BCunlambda}
above).

First, we have for a.a. $t\in (0,n)$
\begin{align} \label{inequal}
\frac{1}{2}\frac{d}{dt}\big[ \, a\frac{d}{dt}{\Vert u_{kn} -
u_{jn}\Vert}^2  \, \big] \ge & \  a {\Vert u_{kn}^{\prime} -
u_{jn}^{\prime} \Vert}^2,
\end{align}
which is similar to \eqref{ine} above. Hence the function
$t\rightarrow a(t)\frac{d}{dt}{\Vert u_{kn}(t) -
u_{jn}(t)\Vert}^2$ is nondecreasing on $[0,n]$. Since it equals
zero at $t=n$, it follows that it is $\le 0$ for all $t\in [0,n]$,
hence the function $t \rightarrow \Vert u_{kn}(t) -
u_{jn}(t)\Vert$ is nonincreasing on $[0,n]$. In particular,
\begin{equation}\label{kjn}
\Vert u_{kn}(t) - u_{jn}(t)\Vert \le \Vert x_k - x_j \Vert \ \
\forall t\in [0,n].
\end{equation}
Therefore,
\begin{equation}\label{kj}
\Vert u_{k}(t) - u_{j}(t)\Vert \le \Vert x_k - x_j \Vert \ \
\forall t\ge 0,
\end{equation}
which shows that there exists a function $u\in C([0,\infty );H)$
such that $u_k$ converges to $u$ in $C([0,R];H)$ for all $R\in (0,
\infty )$, so in particular $u(0)=x$. According to
\eqref{bound_un} (where $E = E(x_k,f)$ is bounded), we also have
 $\sqrt{a_-}{\Vert u \Vert} \in L^{\infty}({\mathbb{R}}_+)$.

On the other hand, we have for a.a. $t\in (0,n)$
\begin{align}\label{huhuhu}\nonumber
\frac{1}{2}\frac{d}{dt}\big[ a(t)\frac{d}{dt}{\Vert u_{kn}(t)
\Vert }^2 \big] = & \ \frac{d}{dt}\big( a{u^{\prime}_{kn}},u_{kn}
\big)
\\ \nonumber
 = & \Big( {\big( a{u^{\prime}_{kn}} \big)
}^{\prime}, u_{kn} \Big) + a{\Vert u_{kn}^{\prime} \Vert}^2
\\ \nonumber
\ge & \ b(f,u_{kn}) + a{\Vert u_{kn}^{\prime} \Vert}^2 \\
\nonumber \ge & \ -b\Vert f\Vert \cdot \Vert u_{kn} \Vert +
a{\Vert u_{kn}^{\prime} \Vert}^2 \\ \nonumber \ge & \
-\frac{1}{p_0}\big( \sqrt{a_-}\Vert f \Vert  \big) \cdot \big(
\sqrt{a_-}\Vert u_{kn}
\Vert  \big) + a{\Vert u_{kn}^{\prime} \Vert}^2 \\
\ge & \ -\frac{E(x_k,f)}{p_0}\big( \sqrt{a_-}\Vert f \Vert  \big)
+ a{\Vert u_{kn}^{\prime} \Vert}^2,
\end{align}
where $p_0 =\mathrm{ess} \inf \ p>0$. To obtain the last
inequality we have used \eqref{bound_un}. Now we multiply
\eqref{huhuhu} by $t$ and then integrate over $[0,n]$ to derive
\begin{align}\label{sasa}\nonumber
\int_0^nta{\Vert u_{kn}^{\prime} \Vert}^2dt \le & \ M
-\frac{1}{2}\int_0^na\frac{d}{dt}{\Vert u_{kn} \Vert}^2dt
\\ \nonumber
\le & \ M + \frac{1}{2}{\Vert x_k \Vert}^2 +
\int_0^n\frac{aq}{p}{\Vert u_{kn} \Vert}^2dt \\ \nonumber \le & \
M + \frac{1}{2}{\Vert x_k \Vert}^2 +
\frac{1}{p_0}\int_0^na_+q^+\big( a_-{\Vert u_{kn} \Vert}^2 \big)
\, dt \\ \nonumber \le & \ M + \frac{1}{2}{\Vert x_k \Vert}^2 +
\frac{1}{p_0}E(x_k,f)^2a_+(\infty){\Vert q^+
\Vert}_{L^1({\mathbb{R}}_+)} \\
\le & \ M_1,
\end{align}
where $M$ and $M_1$ are finite constants, since $\Vert x_k \Vert$
and $E(x_k,f)$ are bounded sequences. Therefore,
\begin{equation}\label{mmm}
\int_0^{\infty}ta{\Vert u_{k}^{\prime} \Vert}^2dt \le M_1,
\end{equation}
i.e., the sequence $(\sqrt{ta_-}u^{\prime}_k)$ is bounded in
$L^2({\mathbb{R}}_+; H)$. In fact this sequence is convergent in
$L^2({\mathbb{R}}_+; H)$ (and so its limit $\sqrt{ta_-}u^{\prime}
\in L^2({\mathbb{R}}_+; H)$). Indeed, multiplying \eqref{inequal}
by $t$ and then integrating the resulting inequality over $[0,n]$,
we get
\begin{align}\label{da} \nonumber
\int_0^n ta(t){\Vert u_{kn}^{\prime} - u_{jn}^{\prime} \Vert}^2dt
\le & \
  -\frac{1}{2}\int_0^na(t)\frac{d}{dt}{\Vert u_{kn}^{\prime} - u_{jn}^{\prime} \Vert}^2dt \\ \nonumber
= & \ \frac{1}{2}{\Vert x_k - x_j \Vert}^2 + \frac{1}{2}\int_0^n
\frac{aq}{p}{\Vert u_{kn} - u_{jn} \Vert}^2dt \\ \nonumber
\le & \ \frac{1}{2}{\Vert x_k - x_j \Vert}^2  + \frac{1}{2}\int_0^n\frac{a_+ q^+}{p}{\Vert u_{kn} - u_{jn} \Vert}^2dt \\
\le & \ \frac{1}{2} \Big[  1 + \frac{1}{p_0}a_+(\infty ) {\Vert
q^+ \Vert}_{L^1({\mathbb{R}}_+)} \Big] {\Vert x_k - x_j \Vert}^2.
\end{align}
To derive the last inequality we have used inequality \eqref{kjn}.
From \eqref{da} it follows that
\begin{equation}\label{der}
\int_0^{\infty} ta(t){\Vert u_{k}^{\prime} - u_{j}^{\prime}
\Vert}^2dt \le  \ c{\Vert x_k - x_j \Vert}^2,
\end{equation}
where $c =\frac{1}{2} \Big[  1 + \frac{1}{p_0}a_+(\infty ) {\Vert
q^+ \Vert}_{L^1({\mathbb{R}}_+)} \Big]$, which shows that
$(\sqrt{ta_-}u^{\prime}_k)$ is Cauchy, hence convergent, in
$L^2({\mathbb{R}}_+; H)$, as asserted.

In the following we shall prove that $t^{3/2}u^{\prime \prime} \in
L^2_{loc}([0,\infty ); H)$ and $u$ is a strong solution of
equation \eqref{E}. To this purpose, it is enough to establish a
local $L^2$-estimate for $u_k^{\prime \prime}$. We need to use
$u_{kn\lambda}$, the solution of
\begin{equation}\label{fox}
{\big( au^{\prime}_{kn\lambda} \big)}^{\prime} =
b(A_{\lambda}u_{kn\lambda} + f) \ \text{ a.e. in } \ (0,n); \
u_{kn\lambda}(0)=x_k, \ u_{kn\lambda}(n)=0.
\end{equation}
For a given $R>0$, $R<n$, define $h_R(t)=\min \{t, R-t \} , \ 0\le
t \le R$. Multiplying equation \eqref{fox} by
$h_R^3(t)A_{\lambda}u_{kn\lambda}(t)$ and integrating over
$[0,R]$, we get the following inequality (which is similar to
\eqref{ah} above)
\begin{align}\label{ahah}\nonumber
\int_0^Rh_R^3b{\Vert A_{\lambda}u_{kn\lambda}\Vert}^2dt \le  &
-3\int_0^Rh_R^2h_R^{\prime}\big( u_{kn\lambda}^{\prime},
 A_{\lambda}u_{kn\lambda}  \big) \, dt -\int_0^Rh_R^3b
\big(  f, A_{\lambda}u_{kn\lambda} \big) \, dt \\ \nonumber \le &
\ 3\int_0^Rah_R^{3/2}\Vert A_{\lambda}u_{kn\lambda} \Vert \cdot
h_R^{1/2} \Vert u_{kn\lambda}^{\prime}  \Vert \, dt \\
& \ + \int_0^Rh_R^3b \Vert f \Vert \cdot \Vert
A_{\lambda}u_{kn\lambda} \Vert \, dt.
\end{align}
Here, following an idea from \cite{Bruck2}, we have used the
function $h_R$ in order to get rid of the
term involving $A_{\lambda}x_k$ which is no longer bounded. Of
course, the information we get is a bit weaker, but enough to
ensure that $u$ is a strong solution. Arguing as before (see
\eqref{sasa}), we find that
\begin{equation}\nonumber
\int_0^Rta{\Vert u^{\prime}_{kn\lambda}  \Vert}^2dt \le M_1,
\end{equation}
where $M_1$ is the same constant as in \eqref{sasa}. It follows
that
\begin{equation}\label{aaa}
\int_0^Rh_Ra{\Vert u^{\prime}_{kn\lambda}  \Vert}^2dt \le M_1.
\end{equation}
By \eqref{ahah} and \eqref{aaa} we see that $\{
t^{3/2}A_{\lambda}u_{kn\lambda}; \ \lambda >0 \}$ is uniformly
bounded in $L^2(0,R/2;H)$ and so is $\{ t^{3/2}u^{\prime
\prime}_{kn\lambda}; \ \lambda >0 \}$ (by using the  equation $
pu_{kn\lambda}^{\prime \prime} + q u_{kn\lambda}^{\prime} =
A_{\lambda}u_{kn\lambda} + f$). Consequently, the sequence
$(t^{3/2}u^{\prime \prime}_{k})$ is also bounded in
$L^2(0,R/2;H)$. For a small $\delta >0$, denote $Z_{\delta, R} =
L^2(\delta , R/2;H)$. In this space, $u_k$ converges strongly to
$u$, $u_k^{\prime}$ converges strongly to $u^{\prime}$ (cf.
\eqref{der}), while $u^{\prime \prime}_{k}$ converges weakly to
$u^{\prime \prime}$ (in fact, $t^{3/2}u^{\prime \prime} \in
L^2(0,R/2;H)$). Passing to the limit in the equation
$$
pu_k^{\prime \prime} + q u_k^{\prime}\in Au_k + f,
$$
regarded as an equation in $Z_{\delta, R}$, we see that $u$
satisfies equation \eqref{E} for a.a. $t\in (\delta , R/2)$, hence
for a.a. $t>0$, since $\delta$ and $R$ were arbitrarily chosen.

To complete the proof, let us show that $u$ is unique. Assume
that $v$ is another strong solution of \eqref{E}, \eqref{B}
satisfying $\sqrt{a_-}\Vert v \Vert \in L^{\infty}({\mathbb{R}}_+)$. 
Then, for a.a. $t>0$,
\begin{align}\label{gaga}\nonumber
\frac{1}{2}\frac{d}{dt}\Big[ a(t)\frac{d}{dt}{\Vert u(t) - v(t)
\Vert}^2 \Big] = & \ \frac{d}{dt}\Big( a(u^{\prime} -
v^{\prime}),u-v \Big) \\ \nonumber = & \ \Big( {\big( a(u^{\prime} -
v^{\prime}) \big)}^{\prime}, u-v  \Big) + a{\Vert u^{\prime} -
v^{\prime} \Vert}^2 \\ \nonumber
\ge & \ a{\Vert u^{\prime} - v^{\prime} \Vert}^2 \\
\ge & \ 0.
\end{align}
This shows that the function $t\rightarrow a(t)\frac{d}{dt}{\Vert
u(t) - v(t) \Vert}^2$ is nondecreasing on $[0, \infty )$. It is
also nonnegative, since it vanishes at $t=0$. Hence, for $0<t<s$ we have 
\begin{equation}\label{www}
0\le a(t)\frac{d}{dt}{\Vert u(t)-v(t) \Vert}^2 \le a(s)\frac{d}{ds}{\Vert u(s)-v(s) \Vert}^2.
\end{equation}
Integrating \eqref{www} with respect to $s$ over $[t,T]$ yields
\begin{align}\label{vvu} \nonumber 
(T-t)a(t)\frac{d}{dt}{\Vert u(t)-v(t) \Vert}^2 \le & \ a(T){\Vert u(T)-v(T)\Vert}^2 - 
\int_t^T \frac{q(s)}{p(s)}a(s){\Vert u(s)-v(s) \Vert}^2 ds \\ \nonumber
\le & \ K + K\int_t^T\frac{q^-(s)}{p(s)}\, ds \\
\le & \ K + M(T-t),
\end{align}
where $K$ is a positive constant (depending on $u$ and $v$), and 
$M=K\cdot \mathrm{ess} \sup \frac{q^-}{p} < \infty $. Dividing \eqref{vvu} 
by $T-t$ and letting $T \rightarrow \infty $, we get
\begin{equation}\label{bbb}
0\le a(t)\frac{d}{dt}{\Vert u(t)-v(t) \Vert}^2 \le M \ \ \forall t\ge 0.
\end{equation} 
Combining \eqref{gaga} and \eqref{bbb} leads to 
\begin{equation}\nonumber
\int_0^{\infty } a(t){\Vert u^{\prime}(t) - v^{\prime}{t}\Vert}^2 dt \le \frac{M}{2}, 
\end{equation}
which implies 
\begin{equation}\label{uuu}
\liminf_{t\rightarrow \infty } \sqrt{a(t)}\Vert u^{\prime}(t) - v^{\prime}(t)\Vert =0. 
\end{equation}
Since 
\begin{align} \nonumber
0\le a(t)\frac{d}{dt}{\Vert u(t)-v(t) \Vert}^2 = & \ 2a(t)(u^{\prime}(t) - 
v^{\prime}(t), u(t)-v(t)) \\ \nonumber
\le & \ 2\sqrt{K}\sqrt{a(t)}\Vert u^{\prime}(t) - v^{\prime}(t) \Vert, 
\end{align}
it follows by \eqref{uuu} that 
\begin{equation}\nonumber
\lim_{t\rightarrow \infty } a(t)\frac{d}{dt}{\Vert u(t) - v(t)
\Vert}^2 =0,
\end{equation}
which clearly implies that $a(t)\frac{d}{dt}{\Vert u(t) - v(t)
\Vert} = 0 \ \forall t\ge 0$. Hence $\Vert u(t)-v(t) \Vert = \Vert u(0)-v(0) \Vert =0 \ \forall
t\ge 0$.
\end{proof}

\begin{theorem}\label{t2}
Assume $(H1)$ and $(H2)$ hold. Then, for each $x\in
\overline{D(A)}$ and $f\in Y$, there exists a unique weak solution
$u$ of \eqref{E}, \eqref{B}, \eqref{C}, and  $\sqrt{ta_-}u^{\prime} \in
L^2({\mathbb{R}}_+;H)$.
\end{theorem}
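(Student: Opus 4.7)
The plan is to construct the weak solution by approximating $f \in Y$ by a sequence $f_n \in Y \cap L^2_{\mathrm{loc}}([0,\infty); H)$ with $\|f_n - f\|_Y \to 0$ (using density of, say, $C_c((0,\infty); H)$ in $Y$), invoking Theorem~\ref{t1} to produce strong solutions $u_n$ with data $(x, f_n)$ satisfying \eqref{C} and $\sqrt{ta_-}\,u_n' \in L^2(\mathbb{R}_+; H)$, and then passing to the limit. If I can show $(u_n)$ converges uniformly on every compact $[0,R]$ to some $u \in C([0,\infty); H)$ with $u(0) = x$, then the triple $(u_n, f_n, u)$ automatically verifies conditions (i)--(iii) in the definition of a weak solution, so $u$ will be a weak solution of \eqref{E}, \eqref{B}.

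The heart of the proof is the stability estimate
\[
\sup_{t\ge 0}\sqrt{a_-(t)}\,\|u_n(t) - u_m(t)\| \le \frac{2 a_+(\infty)}{p_0}\,\|f_n - f_m\|_Y,
\]
where $p_0 := \mathrm{ess\,inf}\,p$. To prove it I would exploit the bounded-interval construction used in Theorem~\ref{t1}. Fix $x_k \in D(A)$ with $x_k \to x$, and let $u_{n,N}^{(k)}$, $u_{m,N}^{(k)} \in W^{2,2}(0,N;H)$ be the solutions provided by Lemma~\ref{second} on $[0,N]$ with common initial value $x_k$ and vanishing at $t = N$, corresponding to forcing terms $f_n$ and $f_m$ respectively. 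The difference $w_N := u_{n,N}^{(k)} - u_{m,N}^{(k)}$ vanishes at both endpoints and satisfies $(a w_N')' = b(\xi_n - \xi_m) + b(f_n - f_m)$ a.e., with $(\xi_n - \xi_m, w_N) \ge 0$ by monotonicity of $A$. Carrying out verbatim the computation of Lemma~\ref{second} that produced \eqref{qminus}--\eqref{hehe}, but applied to $w_N$ (initial value $0$, forcing term $f_n - f_m$), and then the quadratic-inequality argument of \eqref{Mnlambda}, yields the claimed bound for $w_N$ uniformly in $N$ and $k$. Letting $N \to \infty$ (using $u_{n,N}^{(k)} \to u_n^{(k)}$ in $C([0,R];H)$ from the proof of Theorem~\ref{t1}) and then $k \to \infty$ (using the contraction estimate \eqref{kj}) transfers the bound to $u_n - u_m$. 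Since $a_-$ is bounded below on every $[0,R]$, this implies $(u_n)$ is Cauchy in $C([0,R];H)$.

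Passing to the limit in $a_-(t)\|u_n(t)\|^2 \le E(x, f_n)^2$, with $E(x, f_n) \to E(x, f)$, yields \eqref{C} for $u$. The uniform-in-$n$ bound \eqref{mmm} on $\int_0^\infty t a \|u_n'\|^2\,dt$, combined with weak compactness in $L^2(\mathbb{R}_+;H)$ and the fact that $u_n' \to u'$ in the distributional sense on $(0,\infty)$, yields $\sqrt{ta_-}\,u' \in L^2(\mathbb{R}_+; H)$. Uniqueness is then automatic: if $v$ is another weak solution with approximating sequences $(v_n, g_n)$, $g_n \to f$ in $Y$, then the stability estimate applied to the pairs $(u_n, v_n)$ (both strong, same $x$, forcings $f_n$ and $g_n$) gives $\sup_{t\ge 0}\sqrt{a_-(t)}\|u_n(t) - v_n(t)\| \le (2a_+(\infty)/p_0)\|f_n - g_n\|_Y \to 0$, hence $u = v$. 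The main obstacle throughout is the derivation of the stability estimate, since on the half-line there is no natural boundary information at $\infty$ to close an energy identity directly; routing through Lemma~\ref{second}'s two-point boundary-value problem, where both endpoint conditions are explicit, circumvents this difficulty at the cost of an extra limiting step.
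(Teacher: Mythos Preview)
Your proposal is correct and follows essentially the same route as the paper: derive a stability estimate of the form $\sqrt{a_-(t)}\,\|u(t;x,f_1)-u(t;x,f_2)\|\le C\|f_1-f_2\|_Y$ by working at the level of the two-point boundary value problems on $[0,N]$ (where both endpoint conditions are available to close the energy computation), pass to the limit $N\to\infty$, then use density of $Y\cap L^2_{\mathrm{loc}}$ in $Y$ and the uniform bound \eqref{mmm} together with weak compactness for the derivative conclusion. The paper carries out the double integration directly on the difference (inequality \eqref{vvv} and what follows), while you invoke the already-established chain \eqref{qminus}--\eqref{hehe}--\eqref{Mnlambda} applied to the difference $w_N$; these are the same computation. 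Your explicit passage through $x_k\in D(A)$ before letting $k\to\infty$ is slightly more careful than the paper's presentation, but the substance is identical.
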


\begin{proof}
Let $x\in \overline{D(A)}$ and let $f_1, \, f_2\in Y\cap
L^2_{loc}([0,\infty );H)$. Denote by $u(t,x,f_i)$, $i=1,2$, the
corresponding strong solutions given by Theorem \ref{t1}, and by
$u_n(t,x,f_i)$ their approximations ($i=1,2, \ n\in
\mathbb{N}$), as defined above (see \eqref{un} and \eqref{BCun}).
It is easily seen that for a.a. $t\in (0,n)$
\begin{align}\label{vvv}\nonumber
\frac{1}{2}\frac{d}{dt}\Big[ a(t)\frac{d}{dt}&{\Vert u_n(t;x,f_1) - u_n(t; x,f_2)\Vert}^2 \Big]  \\
& \ge \ -\ b(t)\Vert f_1(t) - f_2(t)\Vert \cdot \Vert u_n(t;x,f_1)
- u_n(t; x,f_2) \Vert .
\end{align}
Integrating \eqref{vvv} over $[s , n]$ yields
\begin{align}\nonumber
a(s)\frac{d}{ds}&{\Vert u_n(s;x,f_1) - u_n(s; x,f_2)\Vert}^2  \\
\nonumber \le \ &2\int_s^n b\Vert f_1-f_2\Vert \cdot \Vert
u_n(\tau;x,f_1) - u_n(\tau; x,f_2) \Vert \, d\tau .
\end{align}
A new integration, this time over $[0,t]$, leads to
\begin{align}\nonumber
\int_0^ta(s)&\frac{d}{ds}{\Vert u_n(s;x,f_1) - u_n(s;
x,f_2)\Vert}^2ds   \\ \nonumber \le & \ 2\int_0^n ds \int_s^n
b\Vert f_1-f_2\Vert \cdot \Vert u_n(\tau;x,f_1) - u_n(\tau; x,f_2)
\Vert \, d\tau \\ \nonumber = & \ 2\int_0^n sb\Vert f_1-f_2\Vert
\cdot \Vert u_n(s;x,f_1) - u_n(s; x,f_2) \Vert \, ds.
\end{align}
Therefore,
\begin{align}\label{ss}\nonumber
a(t)&{\Vert u_n(t;x,f_1) - u_n(t; x,f_2)\Vert}^2 \\
\le & \ C_n + \int_0^t\frac{q^+(s)}{p(s)}a(s){\Vert u_n(s;x,f_1) -
u_n(s; x,f_2)\Vert}^2ds,
\end{align}
where
$$
C_n = 2\int_0^n sb\Vert f_1-f_2\Vert \cdot \Vert u_n(s;x,f_1) -
u_n(s; x,f_2) \Vert \, ds.
$$
Using the Gronwall-Bellman lemma, we derive from \eqref{ss}
\begin{align}\nonumber
a_-(t)&{\Vert u_n(t;x,f_1) - u_n(t; x,f_2)\Vert}^2 \le  \ C_n
\\ \nonumber \le & \ \frac{2}{p_0}a_+(\infty)\int_0^nsa_-(s)\Vert
f_1-f_2\Vert \cdot \Vert u_n(s;x,f_1) - u_n(s; x,f_2) \Vert \, ds.
\end{align}
Recall that $p_0 = \mathrm{ess} \inf \ p$. This implies
\begin{equation}
\sqrt{a_-(t)}\Vert u_n(t;x,f_1) - u_n(t; x,f_2)\Vert  \le \
C\int_0^ns\sqrt{a_-(s)}\Vert f_1-f_2\Vert \, ds,
\end{equation}
where $C=2a_+(\infty )/2$. This leads to
\begin{align}\label{doi}\nonumber
\sqrt{a_-(t)}\Vert u(t;x,f_1) - u(t; x,f_2)\Vert  \le & \ C\int_0^{\infty }s\sqrt{a_-(s)}\Vert f_1-f_2\Vert \, ds \\
= & C{\Vert f_1 - f_2  \Vert}_Y, \ \ \forall t\ge 0.
\end{align}
From inequality \eqref{doi} we can easily derive the existence of
a unique weak solution u(t;x,f) for each $x\in \overline{D(A)}$
and $f\in X$. Indeed, it is sufficient to observe that $f$ can be
approximated (with respect to the norm of $Y$) by a sequence
$(f_k)$ of smooth functions with compact support $\subset
(0,\infty)$ and use \eqref{doi} with $f_1:=f_k$ and
$f_2:=f_j$. Note that \eqref{sasa} holds for
$u_n^{\prime}(t;x,f_k)$ with $E(x,f_k)$ (which is also bounded),
so \eqref{mmm}  also holds true for $u^{\prime}(t;x,f_k)$.
Therefore, $\sqrt{ta_-}u^{\prime} \in L^2({\mathbb{R}}_+;H)$ (as the
weak limit in $L^2({\mathbb{R}}_+;H)$ of the sequence
$(\sqrt{ta_-}u_k^{\prime})$). This completes the proof of the
theorem.
\end{proof}

\begin{remark}
If we assume the stronger
condition $q\in L^1({\mathbb{R}}_+)$, then condition \eqref{C} 
becomes
\begin{equation}\tag{$C1$}\label{C1}
\sup_{t\ge 0}\Vert u(t) \Vert < \infty .
\end{equation}
In this case we can state the following result
\end{remark}
\begin{corollary}\label{corollary}
Assume that $(H1)$ holds, $p \in L^{\infty}({\mathbb{R}}_{+})$,
with $\mathrm{ess} \inf p>0$, and $q \in
L^{\infty}({\mathbb{R}}_{+})\cap L^1({\mathbb{R}}_{+})$. Then, for
each $x\in \overline{D(A)}$ and $f\in Y:=L^1(0,\infty ;H; tdt )$,
there exists a unique weak solution $u$ of problem \eqref{E},
\eqref{B}, \eqref{C1}, satisfying $t^{1/2}u^{\prime} \in
L^2({\mathbb{R}}_+;H)$. If in addition $f\in L^2_{loc}([0,\infty
);H)$, then $u$ is a strong solution satisfying $t^{3/2}u^{\prime
\prime}\in L^2_{loc}([0,\infty );H)$. If further $x\in D(A)$, then
$u\in W^{2,2}_{loc}([0, \infty );H)$.
\end{corollary}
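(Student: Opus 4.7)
My plan is to deduce the corollary directly from Theorems \ref{t1} and \ref{t2} by exploiting the fact that, under the stronger hypothesis $q\in L^1(\mathbb{R}_+)\cap L^\infty(\mathbb{R}_+)$, the weight $a_-(t)$ is pinched between two positive constants. This collapses all the weighted conditions appearing in the two theorems onto their unweighted counterparts.

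First I would observe that since both $q$ and $q^+$ lie in $L^1(\mathbb{R}_+)$, so does $q^- = q^+ - q$. Combined with $p_0 := \mathrm{ess}\inf p > 0$, this gives
\[
0 \le \int_0^t \frac{q^-(\tau)}{p(\tau)}\,d\tau \;\le\; \frac{1}{p_0}\|q^-\|_{L^1(\mathbb{R}_+)} \;=:\; K_0 < \infty
\]
for all $t\ge 0$. Exponentiating, I obtain $e^{-K_0} \le a_-(t) \le 1$ for every $t\ge 0$. Consequently the weight $\sqrt{a_-(t)}$ is bounded above and below by positive constants, and the same is true for $a_-(t)$ itself.

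Next I would translate the two theorems through this equivalence. The weighted boundedness condition \eqref{C}, $\sup_{t\ge 0} a_-(t)\|u(t)\|^2 < \infty$, becomes equivalent to the genuine boundedness \eqref{C1}, $\sup_{t\ge 0}\|u(t)\| < \infty$. The space $Y = L^1(0,\infty;H;t\sqrt{a_-(t)}dt)$ coincides (with equivalent norms) with $L^1(0,\infty;H;t\,dt)$, which is the $Y$ appearing in the statement of the corollary. Finally, the integrability $\sqrt{ta_-}\,u' \in L^2(\mathbb{R}_+;H)$ provided by Theorems \ref{t1} and \ref{t2} is equivalent to $t^{1/2}u' \in L^2(\mathbb{R}_+;H)$, and analogously $t^{3/2}u''\in L^2_{loc}([0,\infty);H)$ transfers without change (the weight is already plainly a power of $t$).

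With these equivalences in hand the proof is essentially automatic: given $x\in \overline{D(A)}$ and $f\in L^1(0,\infty;H;t\,dt)$, Theorem \ref{t2} furnishes a unique weak solution $u$ of \eqref{E}, \eqref{B} satisfying \eqref{C} and $\sqrt{ta_-}u' \in L^2(\mathbb{R}_+;H)$, which by the above observations is exactly a unique weak solution satisfying \eqref{C1} and $t^{1/2}u' \in L^2(\mathbb{R}_+;H)$. If in addition $f\in L^2_{loc}([0,\infty);H)$, Theorem \ref{t1} upgrades this to a strong solution with $t^{3/2}u''\in L^2_{loc}([0,\infty);H)$, and the last assertion concerning $x\in D(A)$ is the corresponding statement in Theorem \ref{t1}. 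There is no real obstacle here — the only point that requires a brief verification is the two-sided bound on $a_-$, which in turn hinges on the elementary identity $q^- = q^+ - q$; once that is noted, the corollary is merely a specialization of the main results.
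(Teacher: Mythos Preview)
Your proposal is correct and follows essentially the same route as the paper. The paper's proof also rests entirely on the observation (made in the remark preceding the corollary) that under $q\in L^1(\mathbb{R}_+)$ the weight $a_-$ is pinched between positive constants, so that \eqref{C} collapses to \eqref{C1} and the weighted space $Y$ coincides with $L^1(0,\infty;H;t\,dt)$; the only cosmetic difference is that the paper pauses to combine inequalities \eqref{kj} and \eqref{doi} from the proofs of Theorems~\ref{t1} and~\ref{t2} into a single Lipschitz estimate $\Vert u(t;x_1,f_1)-u(t;x_2,f_2)\Vert \le \Vert x_1-x_2\Vert + \hat C\Vert f_1-f_2\Vert_Y$, whereas you invoke the two theorems directly without restating this intermediate bound.
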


\begin{proof}
By the uniqueness property it follows that every strong solution 
of problem \eqref{E}, \eqref{B}, \eqref{C1}, associated with $(x,f)\in D(A)\times \big[
X\cap L^2_{loc}(0,\infty ;H) \big]$, denoted $u(t;x,f)$, can be
obtained by the limiting procedure developed in the proof of
Theorem \ref{t1}. By \eqref{kj} and \eqref{doi}, we have
\begin{align}\label{basic} \nonumber
\Vert u(t;x_1,f_1) - u(t; x_2,f_2)\Vert  \le & \Vert u(t;x_1,f_1)
- u(t; x_2,f_1)\Vert +
\Vert u(t;x_2,f_1) - u(t; x_2,f_2)\Vert \\
\le & \Vert x_1 - x_2 \Vert + \hat{C}{\Vert f_1 - f_2 \Vert}_Y \ \
\forall t\ge 0,
\end{align}
which is valid for all $(x_i, f_i)\in \overline{D(A)}\times \big[
Y\cap L^2_{loc}(0,\infty ;H) \big]$, $i=1,2$ (i.e., for strong
solutions), and can be extended to all $(x_i, f_i)\in
\overline{D(A)}\times Y$. Inequality \eqref{basic} shows that for
each pair $(x,f)\in \overline{D(A)} \times Y$ there exists a
unique weak solution $u(t;x,f)$ of problem \eqref{E}, \eqref{B},
\eqref{C1}. The rest of the proof follows easily from Theorems
\ref{t1} and \ref{t2}.
\end{proof}

\section{Some Comments}

a. {\bf  Regularity of weak solutions}.  
If $u=u(t;x,f)$ is the weak solution given by Theorem \ref{t2} 
corresponding to some $(x,f)\in \overline{D(A)}\times Y$ and 
$f\in L^2_{loc}([t_0, \infty );H)$ for some $t_0 >0$, then $u$ 
is a strong solution on $[t_0, \infty )$ (i.e., a weak solution 
becomes strong once $f$ becomes locally square integrable). The 
proof of this assertion follows by the uniqueness property of 
weak solutions.
\vskip10pt
b. {\bf The condition $f\in Y$ is optimal in the results above}.
As in \cite{KM}, we consider the simple example $H=\mathbb{R}$,
$A=0$, $p\equiv 1$, $q\equiv 0$, $f(t)={(t+1)}^{-2 - \delta}, \
\delta >0$. For all $x\in \mathbb{R}$, problem \eqref{E},
\eqref{B}, \eqref{C} has a unique solution. Obviously, $f\in Y$
and the result is in line with Corollary \ref{corollary}. If $\delta =0$
then $f$ is no longer a member of $Y$ and all solutions of
equation \eqref{E} are unbounded. 
\vskip10pt 
c. {\bf The
contraction semigroup generated by $u(t;x,0), \ x\in
\overline{D(A)}$}. For all $x\in \overline{D(A)}$, define
$S(t)x:=u(t;x,0), \ t\ge 0$, where $u$ is the solution given by
Theorem \ref{t1}. Then, according to \eqref{basic}, the family $\{
S(t): \overline{D(A)} \rightarrow \overline{D(A)} \}$ is a
semigroup of contractions. In the special case $p\equiv 1$ and
$q\equiv 0$, the infinitesimal generator of this semigroup is
precisely the square root $A^{1/2}$ of $A$, as defined by V. Barbu
(see Chapter V of \cite{VB3} and \cite{HB1} for details on this
semigroup and its generator). 
\vskip10pt 
d. {\bf Smoothing effect
on the starting values}. Let $f\in Y\cap L^2_{loc}([0, \infty );H)$. Then, the
solution $u(t;x,f)$ starting from $x\in \overline{D(A)}$ is a
strong one, so in particular $u(t;x,f) \in D(A)$ for a.a. $t>0$.
This is a smoothing effect: if, for example, $A$ is a partial
differential operator, then $D(A)$ contains functions which are
more regular than those in $\overline{D(A)}$. In the case when $p,
\, q, \, f$ are smooth functions, it is expected that for any
$x\in \overline{D(A)}$, $u(t;x,f) \in D(A)$ for all $t>0$ and
that $u(t;x,f)$ satisfy equation \eqref{E} for all $t>0$ (not just
for a.a. $t>0$). In the special case $p\equiv 1$, $q \equiv 0$,
$f\equiv 0$, this does happen (see \cite{VB3}, p. 315). In this
case $u(t;x,0)=S(t)x$, where $\{ S(t); \, t\ge 0 \}$ is the
semigroup generated by $A^{1/2}$, which is a nice operator, and
$u(t;x,0)$ is the solution of $u^{\prime} + A^{1/2}u \ni 0$.
\vskip10pt 
e. {\bf Variational approach}. Assume that $A$ is the
subdifferential of a proper, convex, lower semicontinuous function
$\phi : H \rightarrow (-\infty , +\infty]$. Since the graph of $A$
contains $[0,0]$, one can assume that $0=\phi (0) = \min \{ \phi
(z): \, z\in H \}$. Recall that for all $(x,f)\in D(A)\times \Big[
 Y \cap L^2_{loc}([0, \infty ); H) \Big]$ the solution $u =
u(t;x,f)$ given by Theorem \ref{t1} on a given interval $[0,R]$ is
the limit of $(u_n)$ in $C([0,R];H)$. Since $u_n$ is the solution
of the two-point boundary value problem \eqref{un}, \eqref{BCun},
it is the minimizer of the functional $\Psi_n : L^2(0,n;H)
\rightarrow (-\infty , +\infty ]$ defined by  $\Psi_n (v) =
\frac{1}{2}\int_0^n\Big( a {\Vert v^{\prime}  \Vert}^2dt  + b\phi
(v) + b(f,v)\Big)\, dt$, if $v\in W^{1,2}(0,n;H), \ \phi (v)\in
L^1(0,n), \ v(0)=x, \ v(n)=0$, and  $\Psi_n (v) =+\infty $,
otherwise.

In fact, any (weak) solution $u(t;x,f)$, $(x,f)\in \overline{D(A)}
\times Y$, can be approximated on compact intervals by minimizers
$u_n$ associated with $(\bar{x}, \bar{f}) \in D(A)\times \Big[ Y
\cap L^2_{loc}([0, \infty ); H) \Big]$ close to $(x,f)$ in
$H\times Y$.

\section{Approximation by the method of artificial viscosity}
Let $\varepsilon >0$ be a small number and let $p(t)=\varepsilon , \ \forall t\ge 0$. In this case, equation \eqref{E} can be regarded as an approximate one for the following reduced equation
\begin{equation}\tag{$E_0$}\label{E_0}
q(t)u^{\prime}(t)\in Au(t)+f(t) \quad \text{for a.a. }t >0.
\end{equation}
Equation \eqref{E_0} where $q^+ \equiv 0$ (i.e., $q(t)\le 0$ for
a.a. $t>0$) is particularly significant for applications to
parabolic and hyperbolic PDE problems, as explained below. It is
expected that any solution $u_{\varepsilon}(t;x,f)$ of \eqref{E}
(with $p\equiv \varepsilon $), \eqref{B}, satisfying
$\sqrt{a_-}\Vert u_{\varepsilon}(\cdot \, ;x,f) \Vert \in
L^{\infty}({\mathbb{R}}_+)$, approximate in some sense the
solution $u(\cdot \, ;x,f)$ of \eqref{E_0}, \eqref{B}, for
$\varepsilon $ small enough. The advantage is that
$u_{\varepsilon}$ is more regular (with respect to $t$) than $u$.
This method of approximation (called the method of artificial
viscosity, due to the term involving $\varepsilon$ in \eqref{E})
was introduced and studied by J.L. Lions \cite{JLL} mainly in
the case of linear PDE problems. See also \cite{BM}. Here we have
more general problems on the whole positive half line that require
separate analysis. Hopefully some results on this subject will be
obtained later. In the following we just present some examples
which seem suitable for the artificial viscosity method.
\vskip10pt Let $\Omega$ be a bounded open subset of
${\mathbb{R}}^k$ with a smooth boundary $\Gamma $. Let $\beta
:D(\beta)\subset \mathbb{R} \rightarrow \mathbb{R}$ be a (possibly
set-valued) maximal monotone mapping, with $0 \in D(\beta)$ and
$0\in \beta (0)$. Consider the nonlinear {\bf diffusion-reaction
equation}
\begin{equation}\tag{$E_1$}\label{E_1}
u_t - div \, \Big( r(x)\, grad \, u \Big) + \beta (u) \ni f(t,x),
\ \ (t,x) \in (0, \infty )\times \Omega ,
\end{equation}
with the Dirichlet boundary condition
\begin{equation}\tag{$DBC$}\label{DBC}
u = 0 \ \ \text{on } (0, \infty ) \times \Gamma,
\end{equation}
and the initial condition
\begin{equation}\nonumber
u(0,x) = u_0(x), \ \ x\in \Omega.
\end{equation}
The function $r=r(x)$ in \eqref{E_1} is assumed to be a
nonnegative smooth function. Obviously, \eqref{E_1}, \eqref{DBC}
can be expressed as an equation of the form \eqref{E_0} in
$H=L^2(\Omega )$ with $q\equiv -1$, and $A$ a maximal monotone
operator in $H$. The corresponding approximate equation (i.e., 
Eq. \eqref{E} with $p\equiv \varepsilon $) is 
\begin{equation}\nonumber
\varepsilon u_{tt} - u_t \in - div \ \Big( r(x)\, grad \, u \Big)
+ \beta (u) + f(t,x), \ \ (t,x) \in (0, \infty )\times \Omega ,
\end{equation}
with the same boundary condition \eqref{DBC}. Note that this is an elliptic type equation with respect to $(t,x)=(t, x_1,...,x_k)$.
\vskip10pt
The nonlinear {\bf wave equation}
\begin{equation}\nonumber
u_{tt} -\Delta u +\beta (u_t) \ni f(t,x), \ \ (t,x) \in (0, \infty
)\times \Omega ,
\end{equation}
with \eqref{DBC}, could be also examined. It is well known that this equation can be represented (by using the substitution $v=u_t$) as an equation of the form \eqref{E_0} with $q \equiv -1$ in the product (phase) space $H=H_0^1(\Omega ) \times L^2(\Omega )$ (see, e.g., \cite{GM}, p. 205). It is easily seen that the approximate equation (i.e., \eqref{E} with $p \equiv \varepsilon $), associated with the wave equation above, is equivalent to 
\begin{equation}\nonumber
{\varepsilon}^2u_{tttt} -2\varepsilon u_{ttt} + u_{tt}-\Delta u + \beta (u_t - \varepsilon u_{tt})\ni f(t,x),
\end{equation}
which obviously provides solutions which are more regular (with respect to $t$) than those of the wave equation.


\begin{thebibliography}{SK}


\normalsize
\baselineskip=17pt



\bibitem{AP} Aftabizadeh, A. R., Pavel, N. H.: Nonlinear boundary value problems for some ordinary
and partial differential equations associated with monotone
operators. J. Math. Anal. Appl. \textbf{156}, 535--557 (1991)
\bibitem{NA} Apreutesei, N. C.: Second-order differential equations on half-line associated with
monotone operators. J. Math. Anal. Appl. \textbf{223}, 472--493 (1998)
\bibitem{BM} Barbu, L., Moro\c{s}anu, G.: Singularly Perturbed Boundary-Value Problems, Birkh\"{a}user
Verlag, AG (2007)
\bibitem{VB1} Barbu, V: Sur un probl\`{e}me aux limites pour une classe d'\'{e}quations diff\'{e}rentielles
nonlin\'{e}aires abstraites du deuxi\`{e}mes ordre en $t$. C. R.
Acad. Sci. Paris \textbf{274}, 459--462 (1972)
\bibitem{VB2} Barbu, V.: A class of boundary problems for second-order abstract differential equations. 
J. Fac. Sci. Univ. Tokyo, Sect. I, \textbf{19}, 295--319 (1972)
\bibitem{VB3} Barbu, V.: Nonlinear Semigroups and Differential Equations in Banach Spaces, Noordhoff,
Leyden (1976)
\bibitem{MB1} Biroli, M.: Sur la solution born\'{e}e ou presque p\'{e}riodique d'une \'{e}quation d'\'{e}volution du
deuxi\`{e}me ordre et du type elliptique. Boll. Un. Mat. Ital.
\textbf{(4) 6}, 229--241 (1972)
\bibitem{MB2} Biroli, M.: Sur l'unicit\'{e} et la presque pé\'{e}riodicit\'{e}, de la solution born\'{e}e d'une
\'{e}quation non lin\'{e}aire d'\'{e}volution du type elliptique. 
Boll. Un. Mat. Ital. \textbf{(4) 9}, 767--774 (1974)
\bibitem{HB1} Brezis, H.: \'{E}quations d'\'{e}volution du second ordre associ\'{e}es \`{a} des
op\'{e}rateurs monotones. Israel J. Math. \textbf{12}, 51--60 (1972)
\bibitem{HB2} Brezis, H.: Op\'{e}rateurs maximaux monotones et semigroupes de contractions dans les
espaces de Hilbert, Math. Studies 5, North Holland (1973)
\bibitem{Bruck1} Bruck, R. E.: On the weak asymptotic almost-periodicity of bounded solutions of
$u^{\prime \prime} \in Au +f$ for monotone $A$. J. Differential
Eqns. \textbf{37}, 309--317 (1980)
\bibitem{Bruck2} Bruck, R. E.: Periodic forcing of solutions of a boundary value problem for a second-order
differential equation in Hilbert space. J. Math. Anal. Appl. \textbf{76}, 159--173 (1980)
\bibitem{DK} Djafari Rouhani, B., Khatibzadeh, H.: Asymptotic behavior of solutions to some homogeneous second-order evolution
equations of monotone type. J. Inequal. Appl., Art. ID 72931,
8 pp. (2007)
\bibitem{DK0} Djafari Rouhani, B., Khatibzadeh, H.: Asymptotic behavior of bounded solutions to a class of second
order nonhomogeneous evolution equations. Nonlinear Anal. \textbf{70}, 4369--4376 (2009)
\bibitem{DK1} Djafari Rouhani, B., Khatibzadeh, H.: Asymptotic behavior of bounded solutions to a nonhomogeneous second
order evolution equation of monotone type. Nonlinear Anal. \textbf{71}, no.~12, e147--e152 (2009)
\bibitem{DK2} Djafari Rouhani, B., Khatibzadeh, H.: A strong convergence theorem for solutions to a nonhomogeneous
second order evolution equation. J. Math. Anal. Appl. \textbf{363}, 648--654 (2010)
\bibitem{DK3} Djafari Rouhani, B., Khatibzadeh, H.: Asymptotic behavior of bounded solutions to some second order
evolution systems. Rocky Mountain J. Math. \textbf{40}, no.~4,
1289-1311 (2010)
\bibitem{DK4} Djafari Rouhani, B., Khatibzadeh, H.: A note on the strong convergence of solutions to a second order
 evolution equation. J. Math. Anal. Appl. \textbf{401}, no.~2, 963--966 (2013)
\bibitem{KM} Khatibzadeh, H., Moro\c{s}anu, G.: Strong and weak solutions to second order differential inclusions
governed by monotone operators. Set-Valued Var. Anal. DOI 10.1007/s11228-013-0270-3. 
\bibitem{JLL} Lions, J. L.: Perturbations singuli\`{e}res dans les probl\`{e}mes aux limites et en controle optimal, Lecture Notes in Math. 323, Springer (1973)
\bibitem{Enzo1} Mitidieri, E.: Asymptotic behaviour of some second order evolution equations. Nonlinear Anal. \textbf{6}, 1245--1252 (1982)
\bibitem{Enzo2} Mitidieri, E.: Some remarks on the asymptotic behaviour of the solutions of second
order evolution equations. J. Math. Anal. Appl. \textbf{107},
211--221 (1985)
\bibitem{GM1979} Moro\c{s}anu, G.: Asymptotic behavior of solutions of differential equations associated to
monotone operators. Nonlinear Anal. \textbf{3}, 873--883 (1979)
\bibitem{GM} Moro\c{s}anu, G.: Nonlinear Evolution Equations and Applications, Reidel, Dordrecht (1988)
\bibitem{GM2013} Moro\c{s}anu, G.: Second-order differential equations on ${\mathbb{R}}_+$ governed
by monotone operators. Nonlinear Anal. \textbf{83}, 60--81 (2013)
\bibitem{GM2014} Moro\c{s}anu, G.: Existence for second-order differential inclusions on ${\mathbb{R}}_+$
governed by monotone operators. Adv. Nonlinear Studies, in press.
\bibitem{NP1} Pavel, N.: Boundary value problems on $[0,+\infty )$ for second order differential equations associated to monotone operators in Hilbert spaces. Proceedings of the
Institute of Mathematics Iasi, 1974, pp. 145-154. Editura Acad.
R. S. R., Bucharest (1976)
\bibitem{PR1} Poffald, E. I., Reich, S.: A quasi-autonomous second-order differential inclusion. 
Trends in the theory and practice of nonlinear analysis
(Arlington, Tex., 1984), 387--392, North-Holland Math. Stud. 110,
North-Holland, Amsterdam (1985)
\bibitem{PR2} Poffald, E. I., Reich, S.: An incomplete Cauchy problem.  J. Math. Anal. Appl. \textbf{113}, no.~2, 514--543 (1986)
\bibitem{LV1} V\'{e}ron, L.: Probl\`{e}mes d'\'{e}volution du second ordre associ\'{e}es \`{a} des
op\'{e}rateurs monotones. C. R. Acad. Sci. Paris \textbf{278},
1099--1101 (1974)
\bibitem{LV2} V\'{e}ron, L.: Equations d'\'{e}volution du second ordre associ\'{e}es \`{a} des
op\'{e}rateurs maximaux monotones. Proc. Royal Soc. Edinburgh,
Sect. A, \textbf{75}, no.~2, 131--147 (1975/76)
\bibitem{LV3} V\'{e}ron, L.: Un exemple concernant le comportement asymptotique de la solution born\'{e}e
de l'\'{e}quation $\frac{d^2u}{dt^2} \in \partial \phi (u)$. 
Monatsh. Math. \textbf{89}, 57--67 (1980)
\end{thebibliography}
\end{document}